\documentclass[final,12pt,3p]{elsarticle}

\usepackage[english]{babel}

\usepackage{amsmath}
\usepackage{amssymb}
\usepackage{graphicx}
\usepackage[colorlinks=false]{hyperref}
\usepackage{bm}
\usepackage{mathtools}
\usepackage{hhline}
\usepackage{amsthm}
\usepackage{subcaption}
\usepackage[dvipsnames]{xcolor}
\usepackage[normalem]{ulem}
\usepackage{multirow}
\usepackage{stmaryrd}

\usepackage{listings}

\usepackage{algpseudocode}
\usepackage{algorithm}
\usepackage{tikz}
\usetikzlibrary{shapes.geometric, arrows.meta, calc}
\algnewcommand{\IIf}[1]{\State\algorithmicif\ #1\ \algorithmicthen}
\algnewcommand{\EndIIf}{\unskip\ \algorithmicend\ \algorithmicif}

\theoremstyle{remark} 
\newtheorem{remark}{Remark}

\theoremstyle{plain} 
\newtheorem{lemma}{Lemma}

\theoremstyle{plain} 
\newtheorem{theorem}{Theorem}

\theoremstyle{definition}
\newtheorem{definition}{Definition}

\usepackage{setspace}
\AtBeginEnvironment{tabular}{\singlespacing}
\AtBeginEnvironment{algorithm}{\singlespacing}

\journal{Journal of Computational Physics}

\begin{document}

\begin{frontmatter}

\title{A Space-time Approach to Entropy-Stable Discontinuous Galerkin and Flux Reconstruction}

\author[mcgill]{Carolyn M. V. Pethrick}
\ead{carolyn.pethrick@mail.mcgill.ca}

\author[mcgill]{Siva Nadarajah}
\ead{siva.nadarajah@mcgill.ca}

\affiliation[mcgill]{organization={Department of Mechanical Engineering, McGill University},
            addressline={845 Sherbrooke St W}, 
            city={Montreal},
            postcode={H3A 0G4}, 
            state={Quebec},
            country={Canada}}

\begin{abstract}

We present a high-order space-time discretization equipped with fully-discrete entropy stability properties
for general choices of volume and surface quadrature rules.
The formulation uses flux reconstruction (FR) in the spatial dimension paired with a discontinuous Galerkin (DG) method in the temporal dimension. The result is a fully-implicit system using polynomial bases in space and time.
An energy-stable discretization is applied to the linear advection equation, yielding optimal $p+1$ convergence for small FR correction parameters and $p$ convergence at the same filter strength as method-of-lines implementations. We can thus recover the space-time equivalent to schemes such as DG, Huynh's FR, or spectral difference through a single parameter $c$.
We follow with a similar space-time nonlinearly-stable flux reconstruction (ST-NSFR) scheme, which uses skew-symmetric stiffness operators in both space and time. The ST-NSFR scheme is fully-discretely entropy preserving using the $c_{DG}$ parameter or entropy-stable for small $c$.
Numerical experiments using the linear advection and Euler equations confirm convergence orders and stability properties. The advantage of FR in a space-time context 
is demonstrated by a reduction in computational cost up to about $70\%$ as $c$ is increased. 
\end{abstract}

\begin{keyword}
High-order methods \sep space-time \sep discontinuous Galerkin \sep entropy stability \sep time stepping \sep Euler equations
\end{keyword}

\end{frontmatter}

\section{Introduction}
\label{introduction}

High-order methods equipped with entropy stability properties have garnered attention for their robustness and predictive capacity. While there are numerous well-developed options for entropy-stable and high-order spatial methods, the choice of temporal integration scheme remains an open topic.

In this work, we are particularly interested in flux reconstruction (FR) approaches. By applying a correction to the discontinuous flux, FR methods have the favourable property of increasing the explicit time step size limit~\cite{huynh_flux_2007}. 
In addition to increasing the stable time step size, locally applying FR has been shown to reduce spurious oscillations near discontinuities~\cite{srinivasan2025adaptivefluxreconstructionscheme}.

Entropy-stable methods have addressed the robustness challenge arising in HO methods. Applying principles first developed by Harten~\cite{harten1983self,harten1983symmetric,harten1987uniformly}, entropy stability results in a guarantee of stability for the nonlinear PDEs which represent many physically-relevant systems. In the past decade, HO researchers have built on the work of Fisher \textit{et al.}~\cite{fisher2013discretely} and Gassner~\cite{gassner2013skew} to formulate entropy-stable HO methods. In this work, we use the nonlinearly-stable FR (NSFR) method of Cicchino, Nadarajah and co-authors~\cite{cicchino2022nonlinearly,cicchino2022provably,cicchino2025discretely} as a spatial discretization, which has a semi-discrete entropy stability property.

Semidiscrete HO formulations are typically paired with ODE methods in a method-of-lines (MOL) approach. 
Explicit RK is the most standard choice for its ease of implementation. 
However, the Courant-Friedrichs-Lewy (CFL) condition causes the maximum explicit time step size to have inverse proportionality to the polynomial degree in HO methods~\cite{cockburn2001Runge}.
To circumvent the CFL condition, implicit MOL approaches can be used. Several authors explore options for implicit time stepping for HO methods in the context of high Reynolds number aerodynamic flows~\cite{hartmann2021implicit}. 
Implicit or implicit-explicit (IMEX) MOL temporal approaches have been applied to FR semidiscretizations, including~\cite{vermeire2016implicit,jia2019accuracy,wang2019implicit, vandenhoeck2019implicit,pereira2024hybridized}. In~\cite{pereira2024hybridized}, the cost of the implicit solution for an FR method was found to be lower than the equivalent DG method when using a diagonally-implicit RK temporal integration method.

Rather than the typical pairing of a spatial semidiscretization and a method-of-lines temporal discretization, space-time methods discretize space and time together. Here, we consider space-time methods to be those which solve spatial and temporal variables together, such that the spatial and temporal parts are not easily separable as in the method-of-lines.
In the context of FR, three types of space-time discretizations have been developed.
First,  Lu \textit{et al.}~\cite{lu2015flow} and Zhang \textit{et al.}~\cite{zhang2017high} apply the space-time expansion of Gassner~\cite{lorcher2007discontinuous,gassner2008discontinuous} to use the maximum time step size to be used in each element, achieving efficiency improvements up to $3.35\times$.
Second, the compact Runge-Kutta FR scheme proposed by Babbar and Chen~\cite{babbar2025compact} applies a FR correction only to the final stage of an RK method, coupling the spatial and temporal variables. Doing so, their method is completely local during stage calculations, exchanging numerical flux only at the end of a time step~\cite{babbar2025compact}.
Finally, the approach of ~\cite{yu2017nodal, mccaughtry2021high, yu2025highordernodalspacetimeflux} simultaneously discretizes space and time using FR correction functions in a $dim+1$ system. Yu~\cite{yu2017nodal} solves advection-dominated problems, while McCaughtry \textit{et al.}~\cite{mccaughtry2021high} advance to advection-diffusion problems. Using correction functions recovering DG, Yu~\cite{yu2025highordernodalspacetimeflux} demonstrates temporal superconvergence on curvilinear grids.
The work herein is distinct from the categories existing in literature due to the implementation of FR as DG with a modified mass matrix and the application of FR only in space.

If fully-discrete entropy stability is desired in MOL schemes, the relaxation Runge-Kutta method of~\cite{ketcheson2019relaxation,ranocha2020relaxation} can be used.
In this work, we explore the space-time alternative to entropy stability. We take inspiration from the scheme of Friedrich \textit{et al.}~\cite{friedrich2019entropy}, which uses skew-symmetric operators for both the spatial and temporal discretizations in a DGSEM space-time discretization. They develop entropy stability and preservation properties for the resulting scheme. Herein, we develop a space-time scheme with the same philosophy of split forms in both space and time. Our nonlinearly-stable approach, based on the semidicretization of ~\cite{cicchino2022nonlinearly,cicchino2022provably,cicchino2025discretely}, adds the novelty of flux reconstruction and uncollocated nodes. Furthermore, we improve on the fully-discrete stability property of previous MOL work~\cite{pethrick2025fully}, where the RRK approach was only able to address the $L_2$ part of entropy change for an FR discretization. The present work demonstrates stability when the FR $c$ parameter{\color{black}, which is used to vary the strength of FR, }is small. 

This work formulates a space-time energy stable FR (ST-ESFR) and a corresponding space-time nonlinearly stable FR (ST-NSFR) scheme. Section~\ref{sec:prelim} introduces preliminaries for the new schemes. By applying flux reconstruction only in the spatial dimension, we verify that the ST-ESFR method described in Section~\ref{sec:ST-ESFR} recovers well-known properties of MOL ESFR methods. The corresponding ST-NSFR scheme, which applies split forms in both space and time, is introduced in Section~\ref{sec:ST-NSFR}. Therein, we prove discrete conservation, entropy preservation and entropy stability for the $c_{DG}$ scheme. A key result of this work is the identification of a range of the FR parameter $c$ for which the broken Sobolev inner product remains non-negative. We show that, for such choices of $c$, the contribution from higher-order derivatives is sufficiently controlled such that the Sobolev inner product is bounded below by a non-negative quantity{\color{black}, a property which implies entropy stability for small $c$.} Section~\ref{sec:Burgers-results} demonstrates the implications of our analysis on a scalar-valued conservation law and presents numerical demonstrations of entropy stability and preservation for Burgers' equation. We follow with a numerical demonstration of the properties of ST-NSFR for vector-valued conservation laws in Section~\ref{sec:results-Euler}, where we demonstrate entropy stability for a range of $c$ values. {We observe stability for FR schemes, including Huynh's FR~\cite{huynh_flux_2007}, emphasizing the significance of the stability property.}
Finally, in Section~\ref{sec:comp_cost}, we demonstrate that the application of flux reconstruction consistently reduces the cost required to solve the implicit system for reasonably small correction parameters.

\section{Preliminaries and Computational Domain}
\label{sec:prelim}

\subsection{Notation}

We use the typical notation that scalars use italic symbols $a$, vectors use boldface $\bm{a}$, and matrices use capital or calligraphic $\mathcal{A}$ or $\bm{A}$. We note that we understand vectors as row vectors, i.e., $\bm{a} = [a_0 \ a_1\ ... a_n] \in \mathbb{R}^n$, consistent with previous work, cf.~\cite{zwanenburg2016equivalence, cicchino2021new,cicchino2025discretely}.

\subsection{Entropy stability analysis for a continuous PDE}
\label{sec:entropy-stability-CTS}

We assume $1D+1$ domain, with one spatial dimension and time discretized as an additional dimension. Consider a general vector-valued PDE  on the 1D+1 domain, periodic in the spatial dimension:
\begin{equation}
    \begin{cases}
        \frac{\partial}{\partial t}\bm{f}_t(\bm{u}({x},t)) +\frac{\partial}{\partial x}\bm{f}_s(\bm{u}({x},t) ) = 0,\quad \text{in } [ x_L,x_R ] \times [0,T]\\
        \bm{u}({x},0)=\bm{u}_0(x) \\
        \bm{u}(x_L,t) = \bm{u}(x_R,t),
    \end{cases}
    \label{eq:general_cons_law}
\end{equation}
where $\bm{f}_t=\bm{u}$ is the state.
The analyses in this work are easily extended to multiple spatial dimensions, but we focus on the $1D+1$ case for notational brevity. We direct the reader to the work of Friedrich \textit{et al.}~\cite{friedrich2019entropy} for a formulation considering the multi-dimensional, curvilinear case.
We consider that the PDE of Eq.~\eqref{eq:general_cons_law} admits a convex entropy function $s = s(\bm{u})$ with associated entropy flux $\bm{F}$, a set of entropy variables
\begin{equation}
    \bm{v} = \frac{\partial s(\bm{u})}{\partial \bm{u}},
    \label{eq:entropy_vars}
\end{equation}
and the entropy flux-entropy potential pair
\begin{equation}
    (\phi, \bm{\psi}) = \left( \bm{v}^T \bm{u} - s, \bm{w}^T \bm{f} - \bm{F}
    \right).
\end{equation}
{
Then left multiplying the conservation law yields a statement of entropy conservation for smooth solutions,
\begin{equation}
    \frac{\partial s(\bm{u})}{\partial t} + \frac{\partial \bm{F}(\bm{u})}{\partial x} = 0.
    \label{eq:entropy_conservation_in_diffform}
\end{equation}
Integrating~(\ref{eq:entropy_conservation_in_diffform}) over the space-time domain $[-1,1] \times [0,T]$ and using the definition of the entropy potential yields
\begin{equation}
\begin{aligned}    
&\int_{-1}^1 \left[ \bm{v}^T\bm{u} -\phi(\bm{v}) \right]_0^T \;\mathrm{d}\Omega_s + \int_0^T  \left[ \bm{v}^T\bm{f}_s(\bm{u}(\bm{v})) -\psi(\bm{v}) \right]_{-1}^1\;\mathrm{d}t = 0.\\
&\int_{-1}^1 s(\bm{u}(T)) - s(\bm{u}(0))\;\mathrm{d}\Omega_s + \int_0^T  \left[ \bm{v}^T\bm{f}_s(\bm{u}(\bm{v})) -\psi(\bm{v}) \right]_{-1}^1\;\mathrm{d}t = 0.
\label{eq:entropy_conservation_in_intform1}
\end{aligned}
\end{equation}
In a more general setting, physically relevant solutions of~\eqref{eq:general_cons_law} satisfy the entropy inequality,
\begin{equation}
\int_{-1}^1 s(\bm{u}(T))  \;\mathrm{d}\Omega_s \le
\int_{-1}^1 s(\bm{u}(0)) \;\mathrm{d}\Omega_s
- \int_0^T  \left[ \bm{v}^T\bm{f}_s(\bm{u}(\bm{v})) -\psi(\bm{v}) \right]_{-1}^1\;\mathrm{d}t.
\label{eq:entropy_conservation_in_intform2}
\end{equation}
}
{Upon the introduction of a finite-difference discretization identified by superscripts $(\cdot)^h$, Friedrich~\textit{et al.}~\cite{friedrich2019entropy} derive a discrete version of the temporal entropy condition 
\begin{equation}
        \llbracket \bm{v}^h\rrbracket \bm{f}_t (\bm{u}^h_i, \bm{u}^h_j) = \llbracket\phi\rrbracket 
    \label{eq:Tadmor-shuffle},
\end{equation}
where $\llbracket\cdot\rrbracket$ is a volume jump; two-point temporal state functions can be found similar to existing two-point spatial fluxes~\cite{friedrich2019entropy}. The entropy condition~\eqref{eq:Tadmor-shuffle} is the temporal counterpart of the spatial entropy condition
\begin{equation}
        \llbracket\bm{v}^h\rrbracket \bm{f}_s (\bm{u}^h_i, \bm{u}^h_j) = \llbracket\bm{\psi}\rrbracket.
    \label{eq:Tadmor-shuffle-space}
\end{equation}}
Friedrich\textit{ et al.}~\cite{friedrich2019entropy} derive $\bm{f}_t$ for the Euler equations,
magnetohydrodynamics equations, and shallow water equations in~\cite{friedrich2019entropy}.

\subsection{Computational domain}

We introduce the space-time domain $\Omega$, comprising of a spatial domain and a temporal interval, $\Omega=\Omega_s\times[0,T]$. The domain is split into $K$ non-overlapping elements $\Omega_m$, $\Omega \simeq \Omega^h \coloneqq \bigcup_{m=1}^K \Omega_m$, introducing the subscript $m$ for information local to an element. For simplicity, we consider that each element is a rectangular Cartesian element with dimensions $\Delta x_m \times \Delta t_m$.
{Furthermore, each element has a boundary $\Gamma_m$, each parallel to either the spatial direction (denoted as $\Gamma_s$) or temporal direction (denoted as $\Gamma_t$).

Each element $\Omega_m$ is related to the reference space-time element $\Omega^r$,
which is defined on the reference coordinates, $\bm{\bar{\xi}} \coloneqq (\xi, \tau): -1 \leq \xi,\tau \leq 1$. Hence, we can represent the space-time reference domain as $\Omega^r = \Omega_s^r \times \Omega_t^r$, with $\Omega_t^r = (-1,1)$; with corresponding boundary $\Gamma^r$ which consist of spatial faces, $\Gamma_s^r \times \Omega_t^r$ and temporal faces $\Omega_s^r \times \Gamma_t^r$ with $\Gamma_t^r = \{-1,1\}$. } The physical coordinates $\bm{\bar{x}} = (x,t)$ of each element are related to the reference coordinates by an invertible mapping function $\bm{\bar{x}}_m (\bm{\bar{\xi}})$.
For the Cartesian $1D+1$ case, the Jacobian determinant of the mapping is
\begin{equation}
    J_m = \frac{1}{4} \Delta x_m \Delta t_m.
\end{equation}
The Jacobian cofactor matrix is
\begin{equation}
    \bm{C}_m = \frac{1}{2} \sqrt{\Delta x_m\Delta t_m} \bm{I}_2,
\end{equation}
where $\bm{I}_2$ is the two-by-two identity matrix. {\color{black}We also define a 1D Jacobian arising in integrations over the face of an element,
\begin{equation}
    J_{1D}= \frac{1}{2} \sqrt{\Delta x_m\Delta t_m}.
\end{equation}
}

The approximate solution is defined as a direct sum of local approximations on non-overlapping elements,
\begin{equation}
    \bm{u}({x},t) \simeq \bm{u}^h({x},t) =\bigoplus_{m=1}^K \bm{u}^h_m({x},t),
\end{equation}
where we consider approximate local solution
\begin{equation}
     \bm{u}^h_m({x},t)  = \sum_{i=1}^{N_{soln}}{{\chi}_{m,i}(\bm{\bar{x}})\hat{{u}}_{m,i}},
     \label{eq:approx_soln}
\end{equation}
where $\hat{u}_{m,i}$ are modal solution coefficients, represented on a set of $N_{soln}$ basis functions ${\chi}_{m,i}$. We omit the superscript $h$ hereafter for the modal solution coefficients in the interest of simple notation.

We permit that the solution and flux may be represented on different bases. One may prefer to use a basis with a higher integration strength, such that $N_{soln}$ and $N_{q}$ are not strictly equal, to represent the flux with reduced aliasing errors. Thus, we introduce the flux basis functions ${\phi}_{m,i}$, such that the $j$-th component of the flux is defined as
\begin{equation}
    {f}_{m,j}({x},t)  = \sum_{i=1}^{N_{q}}{{\phi}_{m,i}(\bm{\bar{x}})\hat{{f}}_{m,j,i}}.
\end{equation}

We solve the PDE~\eqref{eq:general_cons_law} on the reference element. The reference residual on an element is defined as
\begin{equation}
    R_m^{h,r}(\bar{\bm{\xi}}) \coloneqq  \frac{\partial }{\partial \tau}\bm{f}_t\big(\bm{u}(\bar{\bm{x}}(\bar{\bm{\xi}}))) \big)+ {\nabla}^r \cdot {\bm{f}_s}(\bm{u}(\bar{\bm{x}}(\bar{\bm{\xi}})))
    \label{eq:residual}
\end{equation}
For the remainder of this work, we will omit subscript $m$. It is understood that operators and solutions are local.

\subsection{Node and basis function choices}
There is considerable freedom allotted to the selection of basis functions and nodes, which we will describe before proceeding to the space-time discretization.
We make some reasonable assumptions to simplify the scheme. The primary simplification is that we restrict the element types to Cartesian tensor-product elements. 
It is possible to extend the methods herein to curvilinear meshes (cf. \cite{friedrich2019entropy}), but for the test cases in one spatial dimension, we choose to restrict the element types to the simplest Cartesian case.
Tensor product elements enable sum-factorization techniques~\cite{orszag1979spectral}, improving code scaling.
When choosing tensor-product nodes for a space-time discretization, it is possible to select different points in the temporal and spatial directions. In fact, Friedrich \textit{et al.}~\cite{friedrich2019entropy} demonstrate that one can reduce the integration strength of temporal nodes without sacrificing the overall order of accuracy. While their results demonstrate benefits to using unbalanced nodes, we choose the simpler case where elements have the same number and type of nodes in space and time.
The tensor-product basis functions are written as
\begin{equation}
    \bm{\chi}(\bar{\bm{x}}) \coloneqq \bm{\chi}(x) \otimes \bm{\chi}(t), \in \mathbb{R}^{1 \times (N_{soln})},
\end{equation}
where $\bm{\chi}$ are polynomial basis functions in both space and time. The flux basis $\bm{\phi}(\bar{\bm{x}}) \in \mathbb{R}^{1 \times (N_{q})}$ is defined similarly.

We use a reference element with three sets of nodes. In principle, there is considerably more freedom than the restrictions outlined below. For a scheme which we denote $p$-th order, we define:
\begin{itemize}
    \item Solution nodes, $\bar{\bm{\xi}}_{soln}$, which are a set of $N_{soln} = (p+1)^2$ Gauss-Legendre (GL) or Gauss-Lobatto-Legendre (GLL) nodes. 
    \item Flux quadrature nodes, $\bar{\bm{\xi}}_{q}$, which are a set of $N_{q} = (p+1+N_{OI})^2$ GL or GLL nodes, with the possibility of overintegrating the flux by $N_{OI}$.
    \item Face nodes on the $i$-th face, $\bar{\bm{\xi}}_{f,i}$. In a $1D+1$ space-time element, we choose to use the same node type on each face, and further restrict the choice to be the 1D equivalent to the flux nodes such that we have $N_{f} = p+1+N_{OI}$ points on each face. It is natural to use the same quadrature rule as the flux nodes because we use the face nodes to evaluate numerical fluxes.
\end{itemize}
Each quadrature rule admits a set of integration weights, which we store in diagonal matrices $\mathcal{W}_{soln}, \mathcal{W}_{q}$ and $\mathcal{W}_{f,i}$. 
Furthermore, we choose to use basis functions $\bm{\chi}$ and $\bm{\phi}$ which are Lagrange polynomials defined at the solution and flux nodes, respectively. Choosing collocated basis and volume nodes is not strictly required, but results in some operators being diagonal.

Figure \ref{fig:reference_elem} depicts the reference element with faces labelled. In a $1D+1$ element, faces $1$ and $2$ will always be perpendicular to the spatial dimension, such that the solution along faces $3$ and $4$ will be the $1D$ solution at the beginning and end of a time step, respectively.
We consider two categorizations for the face nodes. The first separates nodes on each face, i.e., $\{\bar{\bm{\xi}}_{f,1}, \bar{\bm{\xi}}_{f,2},\bar{\bm{\xi}}_{f,3},\bar{\bm{\xi}}_{f,4} \}$ for each of the four faces, and the second groups all face nodes together, i.e., $\bar{\bm{\xi}}_{f} = [\bar{\bm{\xi}}_{f,1}\ \  \bar{\bm{\xi}}_{f,2} \ \ \bar{\bm{\xi}}_{f,3} \ \ \bar{\bm{\xi}}_{f,4}]$.

\begin{figure}[h!]
    \centering
    \includegraphics[height=2.5in]{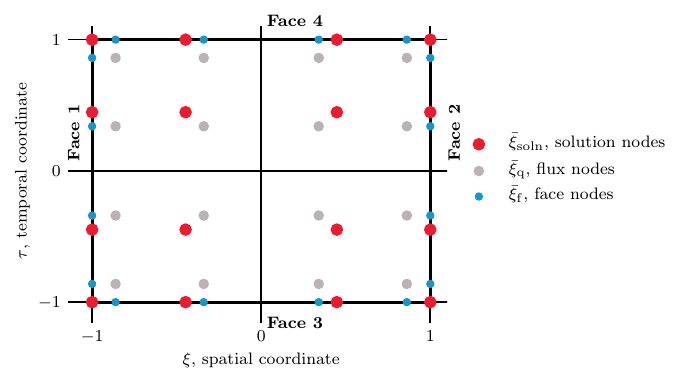}
    \caption{Schematic of the reference space-time element $\Omega_r$. For this $p=3$ example, GLL nodes are chosen for the solution nodes and GL nodes without overintegration are used for the flux and face nodes.}
    \label{fig:reference_elem}
\end{figure}

\subsection{Element Numbering}
\label{sec:elem-numbering}

The element numbering within the domain $\Omega$ is depicted in Fig.~\ref{fig:domain_numbering}. We will refer to elements having the same temporal index as a ``timeslab", e.g., the elements grouped by the dashed line in Fig.~\ref{fig:domain_numbering} make up the first timeslab.

\begin{figure}[h!]
    \centering
    {\setlength{\fboxsep}{10pt}\fbox{
    \begin{tikzpicture}[scale=1.2]

    \tikzstyle{box} = [rectangle, draw, minimum size=0.85cm, thick]
    \tikzstyle{labelstyle} = [font=\small]

    
    \node[box] (b11) at (0,0) {};
    \node[box] (b12) at (1,0) {};
    \node at (2,0) {$\dots$};
    \node[box] (b1k) at (3,0) {};

    \node[box] (b21) at (0,1) {};
    \node[box] (b22) at (1,1) {};
    \node at (2,1) {$\dots$};
    \node[box] (b2k) at (3,1) {};

    \node at (0,1.8) {$\vdots$};
    \node at (1,1.8) {$\vdots$};
    \node at (3,1.8) {$\vdots$};

    \node[box] (bk1) at (0,2.5) {};
    \node[box] (bk2) at (1,2.5) {};
    \node at (2,2.5) {$\dots$};
    \node[box] (bkk) at (3,2.5) {};


    \node[labelstyle] at (0,-0.8) {1};
    \node[labelstyle] at (1,-0.8) {2};
    \node[labelstyle] at (2,-0.8) {$\dots$};
    \node[labelstyle] at (3,-0.8) {$K_s$};
    
    \draw[->, thick] (-0.4,-1.1) -- (1,-1.1) node[midway, below] {$x$};

    \node[labelstyle] at (-0.8, 0) {1};
    \node[labelstyle] at (-0.8, 1) {2};
    \node[labelstyle] at (-0.8, 1.8) {$\vdots$};
    \node[labelstyle] at (-0.8, 2.5) {$K_t$};

    \draw[->, thick] (-1.3, -0.4) -- (-1.3, 1) node[midway, left, rotate=90, anchor=south] {$t$};

    \draw[dashed, thick] ($(b11.south west) + (-0.15,-0.15)$) rectangle ($(b1k.north east) + (0.15,0.15)$);

\end{tikzpicture}}}
    \caption{Schematic of element numbering. Elements are numbered in the $x$ direction from $1$ to $K_s$. Timeslabs are numbered from $1$ to $K_t$, with dashes indicating the first timeslab.}
    \label{fig:domain_numbering}
\end{figure}
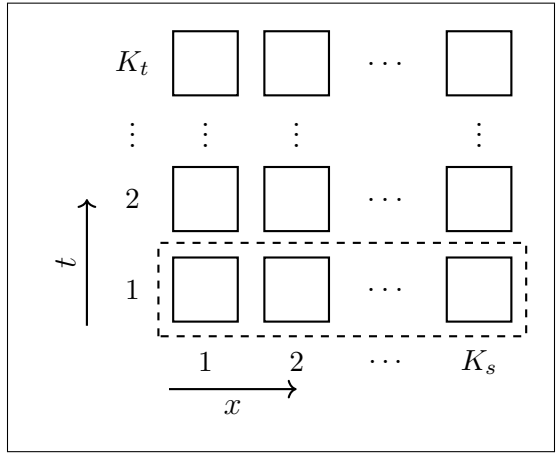

\subsection{Discontinuous Galerkin Discretization}
\label{sec:DG}
The ST-ESFR discretization is formulated with
energy stability in a broken Sobolev norm instead of the standard DG norm.
We begin the DG discretization by multiplying the elementwise residual \eqref{eq:residual} by a set of $N_{soln}$ orthogonal test functions {-- here the flux basis $\phi$ -- }choosing the same functions as the basis functions. The result is integrated over the reference element and integrated by parts twice, arriving at the integral strong form: {
\begin{equation}
\begin{aligned}
    \int_{\Omega^r} \phi_{i}(\bar{\bm{\xi}})
    &\left(
    \frac{\partial \phi_{j}(\bar{\bm{\xi}}) }{\partial \tau}\bm{f}_{t,j}\big(\bm{u}(\bar{\bm{x}}(\bar{\bm{\xi}}))) \big)+ \frac{\partial \phi_{j}(\bar{\bm{\xi}}) }{\partial \xi} {\bm{f}_{s,j}}(\bm{u}(\bar{\bm{x}}(\bar{\bm{\xi}})))
    \right)
    \ d\Omega^r \\
    &+ \int_{\Gamma^r} \phi_i(\bar{\bm{\xi}}) 
         \left[ 
            \hat{\bm{{n}}}_t  \cdot \left(\bar{\bm{f}}^{*,r}_{t,j} - \phi_j (\bar{\bm{\xi}}) \hat{{\bm{f}}}^{r,T}_{t,j}\right)  
            +\hat{\bm{{n}}}_s  \cdot \left(\bar{\bm{f}}^{*,r}_{s,j} - \phi_j (\bar{\bm{\xi}}) \hat{{\bm{f}}}^{r,T}_{s,j}\right)
        \right] \ d \Gamma^r
        =0, \\
        &\ \ \ \ \ \ \ \ \ \ \ \ \ \ \ 
        \forall 
        \begin{cases}
            i= 1, ..., N_{soln} \\
            j = 1, ..., N_{soln}
        \end{cases}
\end{aligned}
\end{equation}
where $\hat{\bm{{n}}}_t$ is the $t$ component of a unit normal vector along the surface of the element and likewise for $\hat{\bm{{n}}}_s$.}
We evaluate the integrals using a set of appropriate quadrature points $\bar{\bm{\xi}}$ and quadrature weights stored as a diagonal matrix $\mathcal{W}$. Each integral is evaluated using its own integration rule, resulting in the distinct subscripts $f$ and $q$.
In preceding work~\cite{allaneau2011connections,zwanenburg2016equivalence,cicchino2021new,cicchino2022nonlinearly,cicchino2022provably,cicchino2025discretely}, FR is formulated 
through a modification of the DG mass matrix. 
As we wish to use a similar mechanism, we premultiply the discretization by an inverse mass matrix to lead to an operator form of the local discretization,
\begin{equation}
     \mathcal{D}_t \hat{\bm{f}}^{T}_{t}
     - L_t \left( \bm{\phi}(\bar{\bm{\xi}}_{f}) \hat{\bm{f}}_{t}^{r,T} - \bm{f}^{*,r}_{t,f}
    \right) +
     \mathcal{D}_s \hat{\bm{f}}^{T}_{s} 
        - L_s \left( \bm{\phi}(\bar{\bm{\xi}}_{f}) \hat{\bm{f}}_{s}^{r,T} - \bm{f}^{*,r}_{s,f} \right)
    = \bm{0}^T,
    \label{eq:DG-local}
\end{equation}
where the local differential and lifting operators in the temporal dimension are defined 
\begin{equation}
    \mathcal{D}_t = \mathcal{M}^{-1}\bm{\chi}(\bar{\bm{\xi}}_{q})^{T} \mathcal{W}_{q} \frac{\partial \bm{\phi} (\bar{\bm{\xi}}_{q})}{\partial\tau}, \ \ \ \ \ \ \ 
    {L_t} = \mathcal{M}^{-1} \bm{\chi}(\bar{\bm{{\xi}}}_{f})^T \mathcal{W}_{f} \text{diag}(\hat{\bm{n}}_t)
\end{equation}
using a mass matrix evaluated using the solution basis and flux quadrature points,
\begin{equation}
    \mathcal{M} = \bm{\chi}(\bar{\bm{\xi}}_{q}) ^T \mathcal{W}_{q} \bm{J} \bm{\chi} (\bar{\bm{\xi}}_{q}),
    \label{eq:mass matrix}
\end{equation}
with $\bm{J}$ being a diagonal matrix having entries of the constant Jacobian $J$.
The spatial local differential operator, $\mathcal{D}_s$, and lifting operator, $L_s$, are defined alike the temporal ones such that, in the present DG case, the premultiplication by the inverse mass matrix does not impact the solution.

As we solve on the reference domain, the physical flux is evaluated on the flux nodes and transformed to the reference element,
\begin{equation}
    \hat{\bm{f}_t}^r(\bm{\hat{u}}) = J_{1D}\Pi_{q} \bm{f}_t(\bm{\chi}(\bm{\bar{\xi}}_{q}) \hat{\bm{u}}) 
\end{equation}
where $\bm{f}_t()$ is the physical flux function and $J_{1D}$ arises due to the cofactor matrix. The same treatment is applied to temporal and spatial fluxes. The projection operator is defined
\begin{equation}
    {\Pi}_{q} = (\mathcal{M})^{-1} \bm{\chi}(\bm{\xi}_{q})^{T} \mathcal{W}_{q}.
    \label{eq:proj_flux}
\end{equation}

\section{Space-Time Energy Stable Flux Reconstruction for Linear Problems}
\label{sec:ST-ESFR}

The FR discretization redefines the spatial operators to arrive at a form similar to the DG discretization of Eq. (\ref{eq:DG-local}), 
\begin{equation}
       \mathcal{D}_t \hat{\bm{f}}^{r,T}_{t}
     - L_t \left( \bm{\phi}(\bar{\bm{\xi}}_{f}) \hat{\bm{f}}_{t}^{r,T} - \bm{f}^{*,r}_{t,f}
    \right)  + \mathcal{D}_{FR,s} \hat{\bm{f}}^{r,T}_{s} 
        - L_{FR,s} \left( \bm{\phi}(\bar{\bm{\xi}}_{f}) \hat{\bm{f}}_{s}^{r,T} - \bm{f}^{*,r}_{s,f} \right)
    = \bm{0}^T,
    \label{eq:ST-ESFR}
\end{equation}
while the temporal operators remain identical to the definitions given in Section \ref{sec:DG}. The FR operators, $\mathcal{D}_{FR}$ and ${L}_{FR}$, will be defined later in this section.

The FR modification begins with the introduction of $p+1$ one-dimensional correction functions $\bm{g}$ in the spatial coordinate following the foundations of Huynh~\cite{huynh_flux_2007}. The correction functions fulfill a symmetry condition $g^L(\xi) = -g^R(-\xi)$. We further require that each correction function satisfies
\begin{equation}
    g^{f} (\xi_{f_i}) \hat{n}_{s,f_i} = 
    \begin{cases}
        1 \text{ if } f=f_i \\
        0 \text{ otherwise},
    \end{cases}
\end{equation}
on the $f$-th face. That is, the magnitude of the correction functions go to unity at one face, and zero at the opposite face. Recalling that $\hat{n}_s=0$ on faces $3$ and $4$, this definition only applies to faces $1$ and $2$, which are perpendicular to the spatial direction.
According to the fundamental assumption of ESFR~\cite{vincent_new_2011}, a FR scheme is ESFR iff the one-dimensional FR correction functions for an energy-stable scheme are chosen according to
\begin{equation}
    \int_{\Omega_r}\frac{\partial \chi_i(\xi)}{\partial \xi} g^{f} (\xi) \ d \Omega_r 
    - c \frac{\partial^p \chi_i(\xi)^T}{\partial \xi^p}\frac{\partial^{p+1} g^{f} (\xi)}{\partial \xi^{p+1}}
    =0, \forall i = 1, ..., N_p. 
\end{equation}
Under such an assumption, ESFR methods are unified by the correction parameter $c$. The space-time correction functions are uniform in time, such that the temporal direction is not corrected.
{\color{black}As demonstrated by~\cite{allaneau2011connections,zwanenburg2016equivalence,cicchino2021new}, the influence of flux reconstruction can be expressed as a filter on a DG scheme. The ESFR filter operator in a single dimension, $\mathcal{K}_{1D}$, is defined alike previous literature,}
\begin{equation}
    (\mathcal{K}_{1D})_{i,j} \approx c \int_{\Omega_r} J^\Omega \frac{\partial^p \chi_i(\xi)}{\partial \xi^p}\frac{\partial^p \chi_j(\xi)}{\partial \xi^p} \ d\Omega_r  \ \ \ \ \ \ \longrightarrow \ \mathcal{K}_{1D} = c(\mathcal{D}_{1D}^p)^T \mathcal{M}_{1D} (\mathcal{D}_{1D}^p).\label{eq:K_1D}
\end{equation}
It modifies the strong DG mass matrix according to a correction parameter $c$, with $\mathcal{D}^p$ denoting a differential operator constructed on a 1D face raised to the $p$-th power,
\begin{equation}
    \mathcal{D}_{1D}^p = \left(\mathcal{M}_{1D}^{-1}\bm{\chi}({\bm{\xi}}_{f})^{T} \mathcal{W}_{f} \frac{\partial \bm{\chi} ({\bm{\xi}}_{f})}{\partial\xi}\right)^p,\label{eq:D_1D}
\end{equation}
and $\mathcal{M}_{1D}$ being a one-dimensional equivalent to \eqref{eq:mass matrix} without Jacobian influence.
{ We apply the FR correction functions only in the spatial dimension, so the 1D+1 FR operator $\mathcal{K}$ is extended from Eq.~\eqref{eq:K_1D} as a tensor product,}
\begin{equation}
    \mathcal{K} = \frac{1}{J_{1D}}c(\mathcal{D}_\xi^p)^T \mathcal{M} (\mathcal{D}_\xi^p) = \mathcal{K}_{1D} \otimes \mathcal{M}_{1D},
    \label{eq:K_matrix}
\end{equation}
{with $\mathcal{D}_\xi^p$ is the $1D+1$ equivalent to Eq.~\eqref{eq:D_1D}.}
\begin{remark}
    The definition of the FR matrix as stated in Eq.~\eqref{eq:K_matrix} differs from previous multi-dimensional works {\color{black} where the FR modification impacts all coordinate directions,} cf. ~\cite[Eq. (39)]{cicchino2022provably}. The present scheme can be viewed as ESFR in space and DG in time.
\end{remark}

{We modify the mass matrix alike~\cite{cicchino2022provably,cicchino2022nonlinearly,cicchino2025discretely} to apply FR in the spatial dimension to form the FR differential and lifting operators,}
\begin{equation}
    \mathcal{D}_{FR,\xi} = ({\mathcal{M} + \mathcal{K}})^{-1}\bm{\chi}(\bar{\bm{\xi}}_{q})^{T} \mathcal{W}_{q} \frac{\partial \bm{\phi} (\bar{\bm{\xi}}_{q})}{\partial\xi}
    \end{equation}
    and 
    \begin{equation}
    {L_{FR,\xi}} = ({\mathcal{M} + \mathcal{K}})^{-1} \bm{\chi}(\bar{\bm{{\xi}}}_{f})^T \mathcal{W}_{f} \text{diag}(\hat{\bm{n}}_s),
\end{equation}
where $(\mathcal{M+K})$ is referred to as the FR mass matrix. 
{The temporal operators remain unchanged from the DG scheme.}

\subsection{Discrete Conservation for Linear Advection}

We consider a periodic $1D+1$ linear advection problem to investigate the properties of the FR discretization,
\begin{equation}
    \begin{cases}
        \frac{\partial u}{\partial t} + a \frac{\partial u}{\partial x} = 0 \ \ \ \forall (x, t) \in ([x_L, x_R]\times[0, T])\\
        u(x_L,t) = u(x_R,t)\\
        u(x, 0) = u_0(x)
    \end{cases}
\end{equation}
where $a$ is a constant advection speed, and proceed to a discrete conservation proof.
We first introduce a property of the {FR mass matrix} that will aid the proof.

\begin{lemma} \label{lem:filter-of-1}
    The statement $ \mathcal{K} \hat{\bm{1}}^T = \hat{\bm{0}}^T$ holds for the FR portion of the mass matrix $\mathcal{K}$ as introduced in Section~\ref{sec:ST-ESFR}.
\end{lemma}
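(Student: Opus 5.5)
The plan is to reduce the claim to a statement about the one-dimensional differential operator, using the factorisation of $\mathcal{K}$ in Eq.~\eqref{eq:K_matrix}. Here $\hat{\bm{1}}$ denotes the vector of solution coefficients of the constant function $u\equiv 1$. Because the basis $\bm{\chi}$ consists of Lagrange polynomials at the solution nodes, $\hat{\bm{1}}$ is simply the vector of all ones in $\mathbb{R}^{N_{soln}}$. In tensor-product form it is $\hat{\bm{1}}^T = \hat{\bm{1}}_{1D}^T\otimes\hat{\bm{1}}_{1D}^T$, where $\hat{\bm{1}}_{1D}$ is the coefficient vector of the constant in one dimension.

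First, using the mixed-product property of the Kronecker product,
\begin{equation*}
\mathcal{K}\hat{\bm{1}}^T = (\mathcal{K}_{1D}\otimes\mathcal{M}_{1D})(\hat{\bm{1}}_{1D}^T\otimes\hat{\bm{1}}_{1D}^T) = (\mathcal{K}_{1D}\hat{\bm{1}}_{1D}^T)\otimes(\mathcal{M}_{1D}\hat{\bm{1}}_{1D}^T).
\end{equation*}
It therefore suffices to show $\mathcal{K}_{1D}\hat{\bm{1}}_{1D}^T=\hat{\bm{0}}^T$. From Eq.~\eqref{eq:K_1D}, $\mathcal{K}_{1D}\hat{\bm{1}}_{1D}^T = c(\mathcal{D}_{1D}^p)^T\mathcal{M}_{1D}\,\mathcal{D}_{1D}^p\hat{\bm{1}}_{1D}^T$, so it is enough to show that $\mathcal{D}_{1D}\hat{\bm{1}}_{1D}^T=\hat{\bm{0}}^T$. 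Since $p\ge 1$, this would then give $\mathcal{D}_{1D}^p\hat{\bm{1}}_{1D}^T=\hat{\bm{0}}^T$.

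Second, I will verify that $\mathcal{D}_{1D}$ annihilates constants. Every Lagrange basis satisfies $\sum_j\chi_j\equiv1$, so $\frac{\partial\bm{\chi}}{\partial\xi}\hat{\bm{1}}_{1D}^T = \frac{\mathrm{d}}{\mathrm{d}\xi}\sum_j\chi_j = 0$ pointwise, in particular at the quadrature nodes. Applying $\mathcal{M}_{1D}^{-1}\bm{\chi}^T\mathcal{W}$ to a zero vector gives zero. More generally, $\mathcal{D}_{1D}$ is the projection of the derivative of the polynomial represented by the coefficients, so it reproduces exact differentiation on $\mathbb{P}^p$ and kills constants.

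The only step needing care is that the definition in Eq.~\eqref{eq:D_1D} evaluates $\bm{\chi}$ at the face/flux quadrature nodes $\bm{\xi}_f$ rather than at the solution nodes. The partition-of-unity argument holds pointwise for any $\xi$, however, so the choice of nodes does not matter. The same reasoning would also cover the equivalent form $\frac{1}{J_{1D}}c(\mathcal{D}_\xi^p)^T\mathcal{M}\mathcal{D}_\xi^p$ directly, since $\mathcal{D}_\xi$ likewise annihilates $\hat{\bm{1}}^T$.
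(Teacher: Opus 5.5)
Your proof is correct and follows the same idea as the paper's: the rightmost factor $\mathcal{D}^p$ in $\mathcal{K}$ is a discrete spatial derivative, and it annihilates the coefficient vector of a constant. You make the argument more explicit than the paper does, through the Kronecker mixed-product reduction to $\mathcal{K}_{1D}$, the partition-of-unity argument (valid at any evaluation nodes), and the caveat $p\ge 1$, which the paper leaves implicit.
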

\begin{proof}
    {Consider that we define $\bm{\chi}(\bm{\bar{\xi}}_{q})\hat{\bm{1}}^T = \bm{1}^T$.} According to its definition, $\mathcal{K}$ is an operator taking the $p$-th derivative of a vector of solution coefficients in the spatial direction. $\bm{\hat{1}}^T$ holds the coefficients of a $0$-th order polynomial, so the $p$-th derivative will vanish. Therefore, the equality 
    $ \mathcal{K} \bm{\hat{1}}^T = \hat{\bm{0}}^T$ holds.
\end{proof}

\begin{definition} \label{def:IBP}
    In the notation introduced in Sections~\ref{sec:prelim} and \ref{sec:ST-ESFR}, the summation-by-parts (SBP) property holds as follows in the spatial direction,
    \begin{equation}
     \mathcal{D}_{FR,s} + \mathcal{D}_{FR,s}^T = L_{FR,s,1}\bm{\phi}(\bar{\bm{\xi}}_{f,1}) + L_{FR,s,2} \bm{\phi}(\bar{\bm{\xi}}_{f,2}),
\end{equation}
while for the temporal direction,
\begin{equation}
     \mathcal{D}_t + \mathcal{D}_t^T = L_{t,3}\bm{\phi}(\bar{\bm{\xi}}_{f,3}) + L_{t,4} \bm{\phi}(\bar{\bm{\xi}}_{f,4})
     .
\end{equation}
\end{definition}

\begin{theorem}
\label{thm:ESFR-conservation}
The ST-ESFR scheme is discretely globally conservative for general $c$.
\end{theorem}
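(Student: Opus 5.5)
The plan is to test the local ST-ESFR discretization \eqref{eq:ST-ESFR} against the constant function, sum over all elements, and show that everything cancels except the temporal boundary terms on the first and last timeslab. Concretely, in each element I will left-multiply \eqref{eq:ST-ESFR} by $\hat{\bm{1}}(\mathcal{M}+\mathcal{K})$ in the spatial part. To do this uniformly, I first multiply the whole equation by the FR mass matrix $(\mathcal{M}+\mathcal{K})$. The spatial FR operators then reduce to $\bm{\chi}(\bar{\bm{\xi}}_q)^T\mathcal{W}_q\partial_\xi\bm{\phi}$ and $\bm{\chi}(\bar{\bm{\xi}}_f)^T\mathcal{W}_f\mathrm{diag}(\hat{\bm n}_s)$. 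The temporal terms become $(\mathcal{M}+\mathcal{K})\mathcal{D}_t$ and $(\mathcal{M}+\mathcal{K})L_t$.

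Next I contract with $\hat{\bm 1}$. By Lemma~\ref{lem:filter-of-1} and the symmetry of $\mathcal{K}$, we have $\hat{\bm 1}\mathcal{K}=\hat{\bm 0}$, so in the temporal terms the FR mass matrix collapses to $\hat{\bm 1}\mathcal{M}$. This recovers the DG quantities $\hat{\bm 1}\bm{\chi}(\bar{\bm{\xi}}_q)^T\mathcal{W}_q\partial_\tau\bm{\phi}$ and the corresponding DG lifting. Using $\bm{\chi}\hat{\bm 1}^T=\bm 1^T$ at the quadrature and face nodes, each volume term becomes the quadrature of the derivative of the flux interpolant.

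For both directions I then invoke the SBP property of Definition~\ref{def:IBP}, taken in its DG form in time and its unscaled form in space. Applied to the vector of ones, it converts each volume term into face quadratures of $\bm{\phi}(\bar{\bm\xi}_f)\hat{\bm f}$. These cancel exactly against the $-L(\bm{\phi}\hat{\bm f})$ terms, leaving only the numerical fluxes: $\hat{\bm 1}$ times the surface quadrature of $\hat{\bm n}\cdot\bm f^{*}$ over the element faces.

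Summing over all elements, the single-valued numerical fluxes cancel pairwise across interior faces, since the normals are opposite and the face weights and $J_{1D}$ are equal on shared Cartesian faces. The spatial boundary fluxes cancel by periodicity. What remains is
\[
\sum_{\text{slab }K_t}\bm 1\mathcal{W}_{f,4}\bm f^{*}_{t,f,4}-\sum_{\text{slab }1}\bm 1\mathcal{W}_{f,3}\bm f^{*}_{t,f,3}=0 .
\]
With the upwind temporal flux $\bm f_t^*$, these are the integral of $\bm u$ at $T$ and of the initial data. This is the discrete global conservation statement, and it holds for any $c$ because $c$ entered only through $\mathcal{K}$, which was annihilated.

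I expect the main obstacle to be the tensor-product interaction between $\mathcal{K}=\mathcal{K}_{1D}\otimes\mathcal{M}_{1D}$ and the temporal operators. Because $\mathcal{K}$ does not commute with $\mathcal{M}^{-1}$, I need the left-multiplication by $(\mathcal{M}+\mathcal{K})$ to act cleanly on every term. I also need $\hat{\bm 1}\mathcal{K}=0$, which I will justify from the symmetry of $\mathcal{K}$ together with Lemma~\ref{lem:filter-of-1}.

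A subtler point is that the SBP identity stated in Definition~\ref{def:IBP} is written with FR-modified operators. I will therefore first multiply it through by $(\mathcal{M}+\mathcal{K})$ to obtain the norm-free integration-by-parts identity $\bm{\chi}^T\mathcal{W}_q\partial\bm{\phi}+(\cdot)^T=\bm{\chi}_f^T\mathcal{W}_f\mathrm{diag}(\hat{\bm n})\bm{\phi}_f$, and use that form. Finally, I must check that the metric factors ($J$ in $\mathcal{M}$ versus $J_{1D}$ in the fluxes) match between neighbouring elements so that the interface cancellation is exact.
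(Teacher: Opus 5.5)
Your proposal is correct and follows essentially the paper's proof: left-multiply by $\hat{\bm 1}(\mathcal{M}+\mathcal{K})$, use Lemma~\ref{lem:filter-of-1} with the symmetry of $\mathcal{K}$ to remove $\mathcal{K}$ from the temporal terms, apply SBP so only the numerical fluxes survive on each element, and sum over elements with periodic spatial and Dirichlet/outflow temporal boundaries. The one step to re-justify is how you get the norm-free SBP identity. Left-multiplying Definition~\ref{def:IBP} by $(\mathcal{M}+\mathcal{K})$ yields $(\mathcal{M}+\mathcal{K})\mathcal{D}_{FR,s}^T$, not the transposed stiffness matrix, but you only need its action on the constant test vector, $\bm 1\mathcal{W}_q\frac{\partial\bm\phi(\bar{\bm\xi}_q)}{\partial\xi}=\bm 1\mathcal{W}_f\mathrm{diag}(\hat{\bm n}_s)\bm\phi(\bar{\bm\xi}_f)$ and its temporal analogue, which follows directly from exactness of the GL/GLL quadratures for these polynomial integrands.
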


\begin{proof}
We begin the proof by rearranging Eq.~\eqref{eq:ST-ESFR} on a single element for the linear advection case where the physical flux is $f_s=au$,
\begin{equation} 
\begin{aligned}  
 - \mathcal{D}_t J_{1D} \hat{\bm{u}}^T
    &+ \bm{L}_{t}\left( \bm{\phi}(\bar{\bm{\xi}}_{f}) J_{1D}\hat{\bm{u}}^T - \bm{f}^{*,r}_{t}) \right) \\
    &- a \mathcal{D}_{FR,s} J_{1D} \hat{\bm{u}}^T 
     +
    \bm{L}_{FR,s} \left( a \bm{\phi}(\bar{\bm{\xi}}_{f}) J_{1D} \hat{\bm{u}}^T -  \bm{f}_{s}^{*,r} \right) 
    = \bm{0} 
\end{aligned}
\label{eq:general_one_element}
\end{equation}

We then left-multiply by $\bm{\hat{1}} (\mathcal{M}+\mathcal{K})$, which signifies integration in the broken Sobolev norm, and use the SBP property per definition~\ref{def:IBP} in each direction to remove terms, resulting in
\begin{equation} 
\begin{aligned}
    -  \hat{\bm{1}}  (\mathcal{M} + \mathcal{K}) \mathcal{D}_t^T J_{1D}\hat{\bm{u}}^T
    & +  \hat{\bm{1}}  (\mathcal{M}+\mathcal{K}) L_{t} \left( - \bm{f}^{*,r}_{t} \right) \\
    & - a \hat{\bm{1}} (\mathcal{M}+\mathcal{K}) \mathcal{D}_{FR,s}^T J_{1D} \hat{\bm{u}}^T + 
    \hat{\bm{1}} (\mathcal{M}+\mathcal{K}) L_{FR,s} \left(  -  \bm{f}_{s}^{*,r} \right) = \bm{0}.
    \end{aligned}
\end{equation}
Being scalars, all terms can be transposed. We use Lemma~\ref{lem:filter-of-1} to show that terms having $\mathcal{K}\hat{\bm{1}}$  vanish after transposing. The remaining parts of the volume terms also vanish as they take the derivative of a constant. Therefore, only surface terms remain. Writing terms only on the faces where they are nonzero, we have
\begin{equation} 
\begin{aligned}
 +  \hat{\bm{1}}  (\mathcal{M}) L_{t,3} \left( - \bm{f}^{*,r}_{t,3}) \right) 
    & + \hat{\bm{1}}  (\mathcal{M}) L_{t,4} \left( - \bm{f}^{*,r}_{t,4}\right)\\
    + \hat{\bm{1}} (\mathcal{M}) L_{s,1} \left(  -  \bm{f}_{s,1}^{*,r} \right)
    &+ 
    \hat{\bm{1}} (\mathcal{M}) L_{s,2}\left(- \bm{f}_{s,2}^{*,r} \right)
     = \bm{0}.
    \end{aligned}
    \label{eq:thm1-terms-on-face}
\end{equation}

We now consider a single timeslab as presented in Section~\ref{sec:elem-numbering}. 
An appropriate choice of numerical fluxes on a periodic domain causes the first two terms of Eq.~\eqref{eq:thm1-terms-on-face} to cancel, leaving only the surface contributions on the $3$ and $4$ faces. Expanding the lifting operator, the energy balance on the timeslab is
\begin{equation} 
    \sum_{k=1}^{K_s}\hat{\bm{1}} \bm{\chi}(\bar{\bm{\xi}}_{f,3})^T \mathcal{W}_{f,3}  \bm{f}^{*,r}_{t,3}
     = \sum_{k=1}^{K_s} \hat{\bm{1}}
    \bm{\chi}(\bar{\bm{\xi}}_{f,4})^T \mathcal{W}_{f,4}\bm{f}^{*,r}_{t,4}.
\label{eq:conservation_time_linadv}
\end{equation}
We omit the summation index for notational brevity.
Now considering the entire computational domain, numerical fluxes will cancel on interior faces between timeslabs.
For the global case, we point out that the only well-posed choice for temporal numerical flux at the $t=0$ face is upwind, corresponding to a Dirichlet initial condition, and upwind at the $t=T$ face, corresponding to an outflow boundary. Therefore, \eqref{eq:conservation_time_linadv} extends to global conservation in the sense that
\begin{equation}
\begin{aligned}
         \sum_{k=1}^{K_s} \hat{\bm{1}} \bm{\chi}(\bar{\bm{\xi}}_{f,3})^T  &\mathcal{W}_{f,3}  J_{1D} \bm{u}_0(\bm{x}(\bar{\bm{\xi}}_{f,3}))  \Bigg|_{t=0}\\
     &= \sum_{k=1}^{K_s} \hat{\bm{1}}
    \bm{\chi}(\bar{\bm{\xi}}_{f,4})^T \mathcal{W}_{f,4}  J_{1D} \bm{\phi} (\bar{\bm{\xi}}_{f,4})\hat{\bm{u}} \Bigg|_{t=T}.
\end{aligned}
    \label{eq:lin-adv-global-cons}
\end{equation}

\end{proof}

\begin{remark}
    While the $t=0$ surface requires a Dirichlet boundary and the $t=T$ must use pure outflow, the interior surfaces between timeslabs may use other numerical fluxes while maintaining conservation.
\end{remark}

\subsection{Numerical Experiments Using the Space-time ESFR Scheme}
\label{sec:lin-adv-results}

We use linear advection with an advection speed of $a=0.6$ to verify the properties of the ST-ESFR, with initial condition
\begin{equation}
    u_0(x) = 2 \sin(\pi x) + 1.01 
\end{equation}
on the domain $[0,2] \times [0,2]$. We use an upwind numerical flux in space. 
The implicit system is solved using a Jacobian-free Newton-Krylov solver~\cite{knoll2004jacobian} written using the GMRES solver of \texttt{IterativeSolvers.jl}~\cite{Chen2026JuliaLinearAlgebra}. We use a nonlinear tolerance of $1E-10$, which we find to be small enough to be conservative to machine precision.
The $L_2$ error is calculated by comparing to the exact solution $u_{\text{exact}}(x,t)=2 \sin(\pi (x-0.6t)) + 1.01$ across the entire space-time domain and overintegrating by 10 quadrature points per dimension. 

We present the $L_2$ error magnitude and convergence in Table \ref{tab:conv-lin-adv-GLLGL} and Table~\ref{tab:conv-lin-adv-GLGL} using uncollocated and collocated nodes, respectively. We have verified that the $L_\infty$ error follows similar trends. Due to the linearity of the fluxes, both of the node choices lead to the same errors at the level of the linear solver precision. For both $c_{DG}$ and $c_{HU}$, the space-time discretization converges at $p+1$ for even and odd $p$.

\begin{table}[h!]
    \centering
        \caption{Convergence of the ST-ESFR method using GLL solution nodes and GL flux nodes on the linear advection test case.}
        \label{tab:conv-lin-adv-GLLGL}
\begin{tabular}{|c|c|ll|ll|}
\hline
\multirow{2}{*}{Convergence test} & \multirow{2}{*}{$N$} & \multicolumn{2}{c|}{$c_{DG}$} & \multicolumn{2}{c|}{$c_{Hu}$} \\
                                  &                      & $L_2$ error       & rate      & $L_2$ error       & rate      \\ \hline
                                  & 2                    & 8.10E-02          & --        & 1.46E-01          & --        \\
                                  & 4                    & 5.15E-03          & 3.97      & 1.10E-02          & 3.73      \\
                                  & 8                    & 3.27E-04          & 3.98      & 7.09E-04          & 3.96      \\
$p=3$                             & 16                   & 2.04E-05          & 4.00      & 4.46E-05          & 3.99      \\
                                  & 32                   & 1.27E-06          & 4.00      & 2.79E-06          & 4.00      \\
                                  & 64                   & 7.96E-08          & 4.00      & 1.75E-07          & 4.00      \\
                                  & 128                  & 4.97E-09          & 4.00      & 1.09E-08          & 4.00      \\ \hline
                                  & 2                    & 1.12E-02          & --        & 2.28E-02          & --        \\
                                  & 4                    & 4.01E-04          & 4.81      & 8.15E-04          & 4.80      \\
                                  & 8                    & 1.27E-05          & 4.98      & 2.62E-05          & 4.96      \\
$p=4$                             & 16                   & 3.96E-07          & 5.00      & 8.21E-07          & 5.00      \\
                                  & 32                   & 1.24E-08          & 5.00      & 2.56E-08          & 5.00      \\
                                  & 64                   & 3.86E-10          & 5.00      & 8.00E-10          & 5.00      \\
                                  & 128                  & 1.22E-11          & 4.99      & 2.50E-11          & 5.00      \\ \hline
\end{tabular}
    \label{tab:placeholder}
\end{table}

\begin{table}[h!]
\centering
\caption{Convergence of the ST-ESFR method using collocated GL solution and flux nodes on the linear advection test case.}
\label{tab:conv-lin-adv-GLGL}
\begin{tabular}{|c|c|ll|ll|}
\hline
\multirow{2}{*}{Convergence test} & \multirow{2}{*}{$N$} & \multicolumn{2}{c|}{$c_{DG}$} & \multicolumn{2}{c|}{$c_{Hu}$} \\
                                  &                      & $L_2$ error       & rate      & $L_2$ error       & rate      \\ \hline
                                  & 2                    & 8.10E-02          & --        & 1.46E-01          & --        \\
                                  & 4                    & 5.15E-03          & 3.97      & 1.10E-02          & 3.73      \\
                                  & 8                    & 3.27E-04          & 3.98      & 7.09E-04          & 3.96      \\
$p=3$                             & 16                   & 2.04E-05          & 4.00      & 4.46E-05          & 3.99      \\
                                  & 32                   & 1.27E-06          & 4.00      & 2.79E-06          & 4.00      \\
                                  & 64                   & 7.96E-08          & 4.00      & 1.75E-07          & 4.00      \\
                                  & 128                  & 4.97E-09          & 4.00      & 1.09E-08          & 4.00      \\ \hline
                                  & 2                    & 1.12E-02          & --        & 2.28E-02          & --        \\
                                  & 4                    & 4.01E-04          & 4.81      & 8.15E-04          & 4.80      \\
                                  & 8                    & 1.27E-05          & 4.98      & 2.62E-05          & 4.96      \\
$p=4$                             & 16                   & 3.96E-07          & 5.00      & 8.21E-07          & 5.00      \\
                                  & 32                   & 1.24E-08          & 5.00      & 2.56E-08          & 5.00      \\
                                  & 64                   & 3.86E-10          & 5.00      & 8.00E-10          & 5.00      \\
                                  & 128                  & 1.20E-11          & 5.00      & 2.50E-11          & 5.00      \\ \hline
\end{tabular}
\end{table}

A key property of ESFR schemes is that the optimal $p+1$ order of convergence is maintained as the correction parameter $c$ is increased, until the order of accuracy drops to $p$~\cite{castonguay2012high}. 
We use a numerical convergence order obtained by performing a single refinement from $16\times16$ elements to $32\times32$ elements using a range of $c$ values from $10^{-7}$ to $10^{4}$. 
We compare to a MOL result from the same code, wherein the FR spatial part of the scheme~\eqref{eq:ST-ESFR} is paired with the RK54 method. 
A small time step size is chosen such that the spatial error dominates. 
Figure \ref{fig:OOA-vs-c} demonstrates that both method-of-lines and space-time FR schemes lose an order at a similar location. 
{\color{black}All four combinations of GL and GLL nodes were tested, and none yielded visual differences; we therefore plot only the result using the uncollocated GLL solution and GL quadrature nodes.}
Furthermore, they lose an order at approximately the same $c$ value as that of Castonguay~\cite{castonguay2012high}; the slight deviation can be attributed to the difference in the temporal integration method.

\begin{figure}[h!]
    \centering
    \includegraphics[height = 4 in]{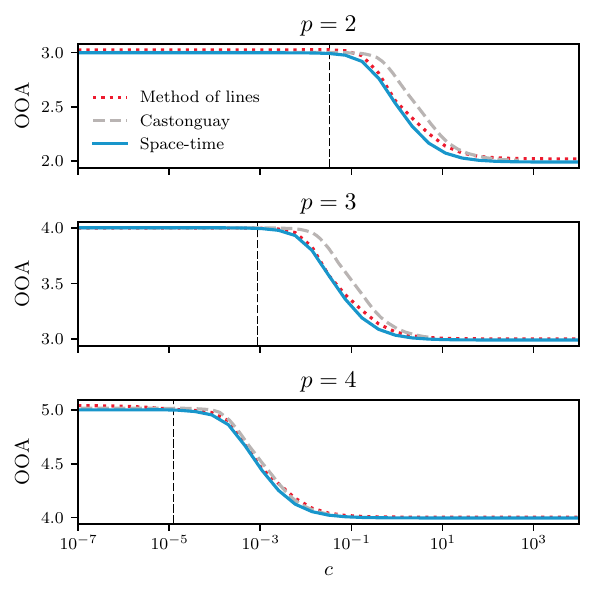}
    \caption{Experimental convergence order for the method of lines and space-time ESFR approaches for linear advection using the same code. The result is compared to reference data from Castonguay~\cite[Figure 3.6]{castonguay2012high}, noting that $c$ values have been adjusted to a Legendre basis per \cite[Section 3]{cicchino2021new}. The vertical dashed line indicates $c_{Hu}$.}
    \label{fig:OOA-vs-c}
\end{figure}

\section{Space-Time Entropy Stable Schemes}
\label{sec:ST-NSFR}

The ST-NSFR version of the discretization follows directly from the preliminaries in Section~\ref{sec:prelim} and is similar in philosophy to the ST-ESFR discretization presented in Section~\ref{sec:ST-ESFR}. We begin by defining the spaces arising in the analysis of the ST-NSFR scheme.

\begin{definition}
The classical Sobolev space is defined as 
$$W^{k,p}(\Omega_s) := \{ \bm{u} \in [L^2(\Omega_s)]^{d\times N_s} : \partial^{\alpha} \bm{u} \in [L^2(\Omega_s)]^{d\times N_s}, |\alpha| \leq k \}$$
where $d$ is the spatial domain $\Omega_s \subset \mathbb{R}^{d}$ and $N_s$ denotes the number of components of the vector-valued function $\bm{u}$, with the norm 
    \begin{displaymath}
        \vert\vert \bm{u} \vert\vert_{W^{k,p}(\Omega_s)}^p = \sum_{|\alpha| \leq k}\int_{\Omega_s} \left(\partial^{\alpha} \bm{u} \right)^p\; d\xi,
    \end{displaymath}
and the corresponding seminorm of order $k$
    \begin{equation}
        \vert \bm{u} \vert_{W^{k,p}(\Omega_s)}^p = \sum_{|\alpha| = k}\int_{\Omega_s} \left(\partial^{\alpha} \bm{u} \right)^p\; d\xi.
        \label{eq:k-seminorm}
    \end{equation}

\end{definition}

\begin{definition}
In the case $p=2$, the Sobolev space forms a Hilbert space, with $W^{k,2} = H^k$, where   the Hilbert space $H^k$ admits an inner product and is defined in terms of the $L^2$ inner product
\begin{displaymath}
    \langle \bm{u}, \bm{v} \rangle_{H^{k}(\Omega_s)} = \sum_{|\alpha| \leq k} \langle \partial^{\alpha} \bm{u}, \partial^{\alpha} \bm{v} \rangle_{L^2(\Omega_s)}.
\end{displaymath}
\end{definition}

\begin{definition}
    We define a broken Sobolev space as one that contains only the weighted sum of the first and last in the sequence
        \begin{displaymath}
        \vert\vert \bm{u} \vert\vert_{W^{k,p}_c(\Omega_s)}^p = \int_{\Omega_s} \bm{u}^p + c\left(\partial^{\alpha} \bm{u} \right)^p\; d\xi,
    \end{displaymath}
    with the parameter $c$ as the weight.
    \label{defineBrokenSobolevNorm}
\end{definition}

\begin{definition}
    For both linear and nonlinear conservation laws, the semi-discrete flux reconstruction approach as presented in the previous section establishes energy or nonlinear/entropy stability in the stated broken Sobolev space $H^{k}_c(\Omega)$ with inner product $\langle \cdot, \cdot \rangle$, inducing the norm, 
    \begin{displaymath}
        \frac{\text{d}}{\text{d}t} \vert\vert \bm{u} \vert\vert_{H^{k}_c(\Omega_s)}^2 = \frac{\text{d}}{\text{d}t} \int_{\Omega} \bm{u}^2\ +c\left(\frac{\partial^k \bm{u}}{\partial \xi^k} \right)^2\; d\xi \leq 0,
    \end{displaymath}
    where the modified norm only contains the zeroth and $p$-th derivative terms. The subscript $c$ in $H^{k}_c(\Omega_s)$ signifies the use of the broken Sobolev space. 
\end{definition}

\subsection{Space-time Nonlinearly-Stable Flux Reconstruction Discretization}

As in Section~\ref{sec:ST-ESFR}, the ST-NSFR scheme is formulated by discretizing space and time alike the semidiscretization of~\cite{cicchino2025discretely}. The ESFR mass matrix of Eq.~\eqref{eq:K_matrix} is only applied to the spatial dimension. We proceed to the presentation of the fully-discrete form of the ST-NSFR scheme for a single state on a single element,
{\color{black}\begin{equation}
\begin{aligned}
(\mathcal{M})^{-1}  &
     [\bm{\chi}(\bar{\bm{\xi}}_{q})^T \ \bm{\chi}(\bar{\bm{\xi}}_{f})^T]
    \left[
    (\tilde{\mathcal{Q}} - \tilde{\mathcal{Q}}^T)_{t}
    \odot 
    \mathcal{F}^r_{t} \right]
    \bm{1}^T + {L}_{t} \left( \bm{f}^{*,r}_t
    \right) \\ &+ 
    (\mathcal{M}+\mathcal{K})^{-1}  
    [\bm{\chi}(\bar{\bm{\xi}}_{q})^T \ \bm{\chi}(\bar{\bm{\xi}}_{f})^T ]
    \left[
    (\tilde{\mathcal{Q}} - \tilde{\mathcal{Q}}^T)_{s}
    \odot 
    \mathcal{F}^r_{s} \right] 
    \bm{1}^T \\
     &+ {L}_{FR,s}\left( \bm{f}^{*,r}_s \right)
    = \bm{0}^T.
\end{aligned} \label{eq:STNSFR}
\end{equation} }
The discretization uses the skew-symmetric stiffness operator of Chan~\cite{chan2018discretely}, which is written in the spatial direction as
\begin{equation}
    (\tilde{\mathcal{Q}} - \tilde{\mathcal{Q}}^T)_{s}
    =
    \begin{bmatrix}
        \mathcal{W}_{q} \frac{\partial}{\partial \xi} \bm{\phi}(\bar{\bm{\xi}}_{q}) - \frac{\partial}{\partial \xi} \bm{\phi}(\bar{\bm{\xi}}_{q})^T \mathcal{W}_{q}  
        &  \bm{\phi}(\bar{\bm{\xi}}_{f})^T \mathcal{W}_{f} \text{diag} (\hat{\bm{n}}_s)
         \\
        -\mathcal{W}_{f} \text{diag} (\hat{\bm{n}}_s) \bm{\phi}(\bar{\bm{\xi}}_{f})
         & \bm{0}\\
    \end{bmatrix}
\end{equation}
with $(\tilde{\mathcal{Q}} - \tilde{\mathcal{Q}}^T)_{s} \in \mathbb{R}^{(N_{q} + 4 N_{f})\times(N_{q} + 4 N_{f})} $. The temporal equivalent $(\tilde{\mathcal{Q}} - \tilde{\mathcal{Q}}^T)_{t}$ is defined similarly.
For a constant-metric element, the two-point spatial reference flux is
\begin{equation}
    (\mathcal{F}_s^r)_{ij} = \bm{f}^r_s (\tilde{\bm{u}}(\bar{\bm{\xi}}_i), \tilde{\bm{u}}(\bar{\bm{\xi}}_j)) \ \ \ 
    \forall \  1 \leq i,j \leq N_{q}+N_f N_{f},
\end{equation}
and likewise for $\mathcal{F}_t^r$,
where projections between solution nodes and flux or face nodes are done according to the entropy variables, such that
\begin{equation}
    \tilde{\bm{u}}(\bar{\bm{\xi}}) = \bm{u} \left( \bm{\chi}(\bar{\bm{\xi}}) \hat{\bm{v}}^T \right),
    \ \ \ 
    \hat{\bm{v}}^T = \Pi_{q} \bm{v} \left(\bm{\chi}( \bar{\bm{\xi}}_{q}) \hat{\bm{u}}^T \right),
    \label{eq:entropy-projection}
\end{equation}
using a projection operator defined alike Eq.~\eqref{eq:proj_flux} on the solution nodes.
We define skew-symmetric operator $\tilde{\mathcal{Q}}$ and boundary operator $\mathcal{B}$, which arise in subsequent analyses, as
\begin{equation}
    \tilde{\mathcal{Q}}_s = \frac{1}{2}\begin{bmatrix}
        \mathcal{W}_{q} \frac{\partial}{\partial \xi} \bm{\phi}(\bar{\bm{\xi}}_{q}) - \frac{\partial}{\partial \xi} \bm{\phi}(\bar{\bm{\xi}}_{q})^T \mathcal{W}_{q} 
        & \bm{\phi}(\bar{\bm{\xi}}_{f})^T \mathcal{W}_{f} \text{diag} (\hat{\bm{n}}_s) \\
        - \mathcal{W}_{f} \text{diag} (\hat{\bm{n}_s}) \bm{\phi}(\bar{\bm{\xi}}_{f}) 
        & \mathcal{W}_{f}\text{diag} (\hat{\bm{n}}_s)
    \end{bmatrix}
\end{equation}
and
\begin{equation}
    \mathcal{B}_s = \tilde{\mathcal{Q}}_s + \tilde{\mathcal{Q}}_s^T = \begin{bmatrix}
        \bm{0} & \bm{0} \\ \bm{0} & \mathcal{W}_{f}\text{diag} (\hat{\bm{n}}_s)
    \end{bmatrix},
\end{equation}
similar to the definitions of Chan~\cite{chan2019skew}.

{\color{black}In the case that the PDE is vector-valued with $N_s$ states, we assemble the residual per Eq.~\eqref{eq:STNSFR} for each state on each element.}

\begin{remark}
The discretization is similar to that of Friedrich \textit{et al.}~\cite{friedrich2019entropy} in that two-point fluxes are used in both spatial and temporal parts. 
Equation (\ref{eq:STNSFR}) is distinct from previous work in~\cite{friedrich2019entropy} as it allows the use of uncollocated nodes and applies FR.
\end{remark}

\subsection{Conservation and Stability of the ST-NSFR Scheme}

\begin{theorem}
    The ST-NSFR scheme is discretely globally conservative for any value of $c$.
\end{theorem}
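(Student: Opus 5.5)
The argument mirrors the proof of Theorem~\ref{thm:ESFR-conservation}: integrate the local residual against the constant function in the appropriate (broken Sobolev) inner product, show that all volume contributions vanish, and then sum surface terms over the mesh.

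Concretely, fix one element and one state, and left-multiply Eq.~\eqref{eq:STNSFR} by $\hat{\bm{1}}(\mathcal{M}+\mathcal{K})$. Since $\mathcal{K}$ is symmetric, Lemma~\ref{lem:filter-of-1} gives $\hat{\bm{1}}\mathcal{K}=\hat{\bm{0}}$. The temporal terms, which carry $(\mathcal{M})^{-1}$ and $L_t$, are therefore hit effectively by $\hat{\bm{1}}\mathcal{M}$. The spatial terms, which carry $(\mathcal{M}+\mathcal{K})^{-1}$, are hit by exactly the matching inverse. In every case the inverse mass matrix cancels, and we are left with $\hat{\bm{1}}[\bm{\chi}(\bar{\bm{\xi}}_{q})^T\ \bm{\chi}(\bar{\bm{\xi}}_{f})^T] = \bm{1}$, the vector of ones on the combined volume and face nodes, because the Lagrange basis reproduces constants.

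The volume terms then reduce to $\bm{1}\left[(\tilde{\mathcal{Q}}-\tilde{\mathcal{Q}}^T)_{s}\odot\mathcal{F}^r_s\right]\bm{1}^T = \sum_{i,j}(\tilde{\mathcal{Q}}-\tilde{\mathcal{Q}}^T)_{ij}(\mathcal{F}^r_s)_{ij}$, and similarly in time. This is the main step. The two-point fluxes are symmetric, $\bm{f}^r_s(\tilde{\bm{u}}_i,\tilde{\bm{u}}_j)=\bm{f}^r_s(\tilde{\bm{u}}_j,\tilde{\bm{u}}_i)$, as are the temporal state functions of Friedrich \emph{et al.}, while the hybridized operator $\tilde{\mathcal{Q}}-\tilde{\mathcal{Q}}^T$ is skew-symmetric by construction. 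The full double sum of a skew-symmetric matrix Hadamard-multiplied with a symmetric one is therefore zero, for every state and every value of $c$. This is the only point where care is needed: I must check that the skew-symmetry holds for the full $(N_q+4N_f)$ block including the face coupling blocks, which is immediate from the displayed definition. Symmetry of the fluxes must also hold when they are evaluated on the entropy-projected states $\tilde{\bm{u}}$ of Eq.~\eqref{eq:entropy-projection}; the projection does not affect this.

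What remains on each element is $\hat{\bm{1}}\mathcal{M}L_t\bm{f}^{*,r}_t+\hat{\bm{1}}(\mathcal{M}+\mathcal{K})L_{FR,s}\bm{f}^{*,r}_s = \bm{1}\mathcal{W}_f\operatorname{diag}(\hat{\bm{n}}_t)\bm{f}^{*,r}_t+\bm{1}\mathcal{W}_f\operatorname{diag}(\hat{\bm{n}}_s)\bm{f}^{*,r}_s = 0$, with no dependence on $c$. The last step proceeds exactly as in Eqs.~\eqref{eq:thm1-terms-on-face}--\eqref{eq:lin-adv-global-cons}. First sum over a timeslab: single-valued spatial numerical fluxes with opposite normals cancel on interior faces 1 and 2, and periodicity removes the boundary ones, which leaves a balance between faces 3 and 4. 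Next sum over timeslabs: the temporal numerical fluxes cancel across interior slab interfaces. Finally impose upwind temporal fluxes at $t=0$ (the initial data $J_{1D}\bm{u}_0$) and at $t=T$ (the interior trace). This yields the global statement that the integral of each conserved variable at $t=T$ equals its integral at $t=0$, for any $c$.
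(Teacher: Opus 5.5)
Your proposal is correct and follows the same route as the paper. You left-multiply Eq.~\eqref{eq:STNSFR} by $\hat{\bm{1}}(\mathcal{M}+\mathcal{K})$, eliminate the volume terms by skew-symmetry of the Hadamard products, and sum the remaining surface fluxes over the mesh as in Theorem~\ref{thm:ESFR-conservation}. You also spell out two steps the paper leaves implicit: the identity $\hat{\bm{1}}\mathcal{K}=\hat{\bm{0}}$ (Lemma~\ref{lem:filter-of-1} plus symmetry of $\mathcal{K}$) is what removes the $\mathcal{K}\mathcal{M}^{-1}$ factor from the temporal terms, and the skew-symmetry of $(\tilde{\mathcal{Q}}-\tilde{\mathcal{Q}}^T)\odot\mathcal{F}^r$ relies on the two-point fluxes being symmetric.
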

\begin{proof}
For each equation of state, we left-multiply Eq.~\eqref{eq:STNSFR} by $\hat{\bm{1}}(\mathcal{M}+\mathcal{K})$ such that $\bm{\chi}(\bm{\bar{\xi}}_{soln})\hat{\bm{1}}^T = \bm{1}^T$,
\begin{equation*}
\begin{aligned}
    \hat{\bm{1}}(\mathcal{M}+\mathcal{K})(\mathcal{M}+\mathcal{K})^{-1} & 
    [\bm{\chi}(\bar{\bm{\xi}}_{q})^T \ \bm{\chi}(\bar{\bm{\xi}}_{f})^T ]
    \left[
    (\tilde{\mathcal{Q}} - \tilde{\mathcal{Q}}^T)_{s}
    \odot 
    \mathcal{F}^r_{s} \right] 
    \bm{1}^T \\
     &+ \hat{\bm{1}}(\mathcal{M}+\mathcal{K}){L}_{FR,s}\left( \bm{f}^{*,r}_s \right)  \\
     + \hat{\bm{1}}(\mathcal{M}+\mathcal{K})(\mathcal{M})^{-1}  &
     [\bm{\chi}(\bar{\bm{\xi}}_{q})^T \ \bm{\chi}(\bar{\bm{\xi}}_{f})^T]
    \left[
    (\tilde{\mathcal{Q}} - \tilde{\mathcal{Q}}^T)_{t}
    \odot 
    \mathcal{F}^r_{t} \right]
    \bm{1}^T \\ 
    &+ \hat{\bm{1}}(\mathcal{M}+\mathcal{K}){L}_{t} \left( \bm{f}^{*,r}_t
    \right)
    = \bm{0}^T.
\end{aligned}
\end{equation*}
\noindent which simplifies to the following after substitution of the lifting operators,
\begin{equation*}
\begin{aligned}
    \hat{\bm{1}}& 
    [\bm{\chi}(\bar{\bm{\xi}}_{q})^T \ \bm{\chi}(\bar{\bm{\xi}}_{f})^T ]
    \left[
    (\tilde{\mathcal{Q}} - \tilde{\mathcal{Q}}^T)_{s}
    \odot 
    \mathcal{F}_{s} \right] 
    \bm{1}^T
     + \hat{\bm{1}}(\mathcal{M}+\mathcal{K}){L}_{FR,s}\left( \bm{f}^*_s \right)  \\
     + \hat{\bm{1}}(\mathcal{M}+\mathcal{K})(\mathcal{M})^{-1}  &
     [\bm{\chi}(\bar{\bm{\xi}}_{q})^T \ \bm{\chi}(\bar{\bm{\xi}}_{f})^T]
    \left[
    (\tilde{\mathcal{Q}} - \tilde{\mathcal{Q}}^T)_{t}
    \odot 
    \mathcal{F}_{t} \right]
    \bm{1}^T \\ 
    &+ \hat{\bm{1}}(\mathcal{M}+\mathcal{K}){L}_{t} \left( \bm{f}^*_t
    \right)
    = \bm{0}^T.
\end{aligned}
\end{equation*}

Since both $(\tilde{\mathcal{Q}} - \tilde{\mathcal{Q}}^T)_{s}
    \odot 
    \mathcal{F}^r_{s}$ and $(\tilde{\mathcal{Q}} - \tilde{\mathcal{Q}}^T)_{t}
    \odot 
    \mathcal{F}^r_{t}$ are skew-symmetric, the temporal and spatial volume terms vanish. Alike Theorem \ref{thm:ESFR-conservation}, appropriate choice of numerical flux and boundary conditions -- i.e., periodic in space and Dirichlet in time -- leads to global conservation with any $c$.
\end{proof}
\subsection{The ST-NSFR Scheme is Entropy-preserving.}

We closely follow the structure of Theorems 1 and 2 in Friedrich \textit{et al.}~\cite{friedrich2019entropy}, extending the proofs to general node choices. We begin by focusing on the $c_{DG}$ case, followed by an analysis of the FR version.

\begin{theorem}
\label{thm:entropyeqn}
Let $\mathcal{F}_{s}$ be an entropy conservative spatial flux, $\mathcal{F}_{t}$ be an entropy conservative temporal flux, and the correction parameter be $c_{DG}$. Then the space-time NSFR system of equations~(\ref{eq:STNSFR}) 
{\color{black} left-multiplied by ${\bm{\hat v}} \mathcal{M}$ }over a single space-time element is equivalent to a form expressed in only surface fluxes,
\begin{equation}
\begin{aligned}
    \bm{1}\mathcal{W}_{f} \mathrm{diag}(\bm{\hat{n}}_t)\left[ \bm{\phi}^r(\bm{\tilde u}_f) - \bm{\tilde v}_f^T \bm{f}^{*,r}_t\right]
    +\bm{1} \mathcal{W}_f \mathrm{diag} (\bm{\hat{n}}_s) \left[ \bm{\psi}^r (\bm{\tilde u}_f) - \bm{\tilde v}^T \bm{f}^{*,r}_s\right] = 0.
    \label{eq:entropyeqn}
\end{aligned}
\end{equation}
which is an approximation of the statement of conservation of entropy~\eqref{eq:entropy_conservation_in_intform1}
\begin{equation}
\begin{aligned}    
    &\int_{\Omega_s^r} \left[ \left( \bm{\phi}^r (\bm{u}(\Pi\bm{v})) - (\Pi\bm{v})^T \bm{f}^{*,r}_t \right) \cdot \bm{\hat{n}}_t\right]_{-1}^1 \; \mathrm{d}\Omega_s^r \\
    &+\int_{-1}^{1} \int_{\Gamma_s^r}\left[ \bm{\psi}^r (\bm{u}(\Pi\bm{v})) - (\Pi\bm{v})^T \bm{f}^{*,r}_s\right] \cdot \bm{\hat{n}}_s \;\mathrm{d}\Gamma_s^r \mathrm{d}\tau= 0.
\end{aligned}
\end{equation}

\end{theorem}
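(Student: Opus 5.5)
With $c=c_{DG}=0$ we have $\mathcal{K}=0$, so both the spatial and temporal parts of Eq.~\eqref{eq:STNSFR} carry $\mathcal{M}^{-1}$. The plan is to left-multiply by $\hat{\bm{v}}\mathcal{M}$, which removes the inverse mass matrices, and then treat the spatial and temporal contributions identically; only the normal direction and the potential ($\bm{\psi}$ versus $\bm{\phi}$) differ. After this multiplication each volume term reads
\[
\hat{\bm{v}}\,[\bm{\chi}(\bar{\bm{\xi}}_q)^T\ \bm{\chi}(\bar{\bm{\xi}}_f)^T]\left[(\tilde{\mathcal{Q}}-\tilde{\mathcal{Q}}^T)\odot\mathcal{F}^r\right]\bm{1}^T
= \tilde{\bm{v}}\left[(\tilde{\mathcal{Q}}-\tilde{\mathcal{Q}}^T)\odot\mathcal{F}^r\right]\bm{1}^T ,
\]
where $\tilde{\bm{v}}=[\bm{\chi}(\bar{\bm{\xi}}_q)\hat{\bm{v}}^T;\ \bm{\chi}(\bar{\bm{\xi}}_f)\hat{\bm{v}}^T]$ collects the projected entropy variables at volume and face nodes. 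This identification holds because of the entropy projection~\eqref{eq:entropy-projection}: the states $\tilde{\bm{u}}$ entering $\mathcal{F}^r$ are exactly $\bm{u}(\tilde{\bm{v}})$. The surface lifting terms become $\hat{\bm{v}}\bm{\chi}(\bar{\bm{\xi}}_f)^T\mathcal{W}_f\mathrm{diag}(\hat{\bm{n}})\bm{f}^{*,r}=\tilde{\bm{v}}_f^T\mathcal{W}_f\mathrm{diag}(\hat{\bm{n}})\bm{f}^{*,r}$ in both directions.

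Next I handle the Hadamard volume term with Chan's argument. The matrix $\tilde{\mathcal{Q}}-\tilde{\mathcal{Q}}^T$ is skew-symmetric, and the two-point flux is symmetric. Expanding the sum gives
\[
\sum_{ij}(\tilde{\mathcal{Q}}-\tilde{\mathcal{Q}}^T)_{ij}\,\tilde{\bm{v}}_i^T\bm{f}(\tilde{\bm{u}}_i,\tilde{\bm{u}}_j)=\tfrac12\sum_{ij}(\tilde{\mathcal{Q}}-\tilde{\mathcal{Q}}^T)_{ij}(\tilde{\bm{v}}_i-\tilde{\bm{v}}_j)^T\bm{f}(\tilde{\bm{u}}_i,\tilde{\bm{u}}_j).
\]
The Tadmor shuffle conditions~\eqref{eq:Tadmor-shuffle-space} and~\eqref{eq:Tadmor-shuffle} replace each summand by $-\llbracket\psi\rrbracket$ or $-\llbracket\phi\rrbracket$. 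Skew-symmetry then collapses the result to $-\bm{\psi}^T(\tilde{\mathcal{Q}}-\tilde{\mathcal{Q}}^T)\bm{1}^T$ (and likewise with $\bm{\phi}$). I then write $\tilde{\mathcal{Q}}-\tilde{\mathcal{Q}}^T=2\tilde{\mathcal{Q}}-\mathcal{B}$ and use that $\tilde{\mathcal{Q}}\bm{1}^T=\bm{0}$. This holds because the row sums of the volume block $\mathcal{W}_q\partial_\xi\bm{\phi}$ vanish (the derivative of a constant), and the remaining blocks cancel against $\mathcal{W}_f\mathrm{diag}(\hat{\bm n})\bm{\phi}(\bar{\bm\xi}_f)\bm 1^T$ by the SBP property of Definition~\ref{def:IBP}, which relies on exact integration of degree-$2p$ polynomials by the chosen quadrature. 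Only the boundary term $\bm{1}\mathcal{W}_f\mathrm{diag}(\hat{\bm{n}})\bm{\psi}(\tilde{\bm{u}}_f)$ survives, with the sign fixed so that it matches~\eqref{eq:entropyeqn}.

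Adding the surface numerical-flux terms gives~\eqref{eq:entropyeqn}. The temporal direction is treated with $\phi$ and $\hat{\bm{n}}_t$, which are nonzero only on faces $3,4$, and the spatial direction with $\psi$ and $\hat{\bm{n}}_s$. Finally, I interpret the face quadratures as approximations of the integrals over $\Omega_s^r$ at $\tau=\pm1$ and over $\Gamma_s^r\times(-1,1)$, which yields the integral statement.

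The main obstacle is the telescoping step for uncollocated nodes. It requires that $\tilde{\mathcal{Q}}\bm{1}^T=\bm{0}$ holds in the hybridized (volume plus face) setting. It also requires correct bookkeeping of signs and transposes in the row-vector convention, including how the $J_{1D}$ factors in the reference fluxes and potentials scale, so that both potentials appear with consistent reference scaling.
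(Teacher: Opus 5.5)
Your proposal is correct and follows essentially the same route as the paper: left-multiply by $\hat{\bm v}\mathcal{M}$ to cancel $\mathcal{M}^{-1}$ when $\mathcal{K}=\bm 0$, write the Hadamard volume terms in flux-differencing form, apply the spatial and temporal Tadmor conditions on the entropy-projected states, and reduce to face terms using $\tilde{\mathcal{Q}}\bm 1^T=\bm 0^T$ and $\tilde{\mathcal{Q}}+\tilde{\mathcal{Q}}^T=\mathcal{B}$ (the paper cites Chan for the spatial part and writes out only the temporal part, whereas you treat both directions uniformly and verify $\tilde{\mathcal{Q}}\bm 1^T=\bm 0^T$ yourself). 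Two small corrections: first, with $(\tilde{\bm v}_i-\tilde{\bm v}_j)^T\bm f(\tilde{\bm u}_i,\tilde{\bm u}_j)=\psi_i-\psi_j$ the symmetrized sum collapses to $+\bm\psi(\tilde{\mathcal{Q}}-\tilde{\mathcal{Q}}^T)\bm 1^T=-\bm\psi\mathcal{B}\bm 1^T$, which already carries the sign required in Eq.~\eqref{eq:entropyeqn}, so no sign needs to be fixed by hand; second, the SBP relation you rely on only needs the one-dimensional flux quadrature to integrate exactly the product of a flux basis function with the derivative of another (degree $2p-1$ without over-integration), so GLL flux nodes are covered and degree-$2p$ exactness is not required.
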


\begin{proof}
    We begin by requiring that the discretization uses an entropy-conserving two-point flux for the volume term and numerical flux in the spatial dimension. {\color{black}We consider the case where the correction parameter is $c_{DG}=0$, corresponding to $\mathcal{K}=\bm{0}$}. The discretizaton in Eq.~(\ref{eq:STNSFR}) for the space-time element is left-multiplied by the modal coefficients of the entropy variable ${\bm{\hat v}} \mathcal{M}$, and yields 
    \begin{equation}
    \begin{aligned}        
    &\bm{\hat v}    
     [\bm{\chi}(\bar{\bm{\xi}}_{q})^T \ \bm{\chi}(\bar{\bm{\xi}}_{f})^T]
    \left[
    (\tilde{\mathcal{Q}} - \tilde{\mathcal{Q}}^T)_{t}
    \odot 
    \mathcal{F}^r_{t} \right]
    \bm{1}^T 
    +  \bm{\hat v} \bm{\chi}(\bar{\bm{\xi}}_{f})^T\mathcal{W}_f \ \text{diag} (\bm{\hat{n}}_t) \bm{f}^{*,r}_t \\
    + &\bm{\hat v}    
     [\bm{\chi}(\bar{\bm{\xi}}_{q})^T \ \bm{\chi}(\bar{\bm{\xi}}_{f})^T]
    \left[
    (\tilde{\mathcal{Q}} - \tilde{\mathcal{Q}}^T)_{s}
    \odot 
    \mathcal{F}^r_{s} \right]
    \bm{1}^T 
    +  \bm{\hat v} \bm{\chi}(\bar{\bm{\xi}}_{f})^T\mathcal{W}_f \ \text{diag} (\bm{\hat{n}}_s) \bm{f}^{*,r}_s
     = 0
    \end{aligned}
    \label{eq:single-elem-leftmult}
    \end{equation}

\noindent We first introduce ${\bm{\tilde v}}^T = \begin{bmatrix}
        \bm{\chi}(\bar{\bm{\xi}}_{q} )\\ \bm{\chi}(\bar{\bm{\xi}}_{f} )
\end{bmatrix} \bm{\hat v}^T$ on the solution and face nodes, for notational brevity. {\color{black}We will analyse temporal terms and spatial terms separately, which are found in the first line and second line of \eqref{eq:single-elem-leftmult} respectively.}
For the spatial terms, we will follow the approach of Chan~\cite{chan2018discretely,chan2019skew}, which is directly applicable to the underlying NSFR scheme~\cite{cicchino2025discretely}. Chan rewrites the volume hybrid term in flux-differencing form and sums over all quadrature nodes  $\color{black}N_{qf} = \{ N_{q},N_{f} \} $\cite[Eq.(71)]{chan2018discretely} to show
    \begin{equation}
        {\bm{\tilde v}}
        \left[
        (\tilde{\mathcal{Q}} - \tilde{\mathcal{Q}}^T)_{s}
        \odot 
        \mathcal{F}^r_{s} \right]
        \bm{1}^T = \sum_{i,j = 1}^{\color{black}N_{qf}} \tilde{Q}_{s,ij} \llbracket \bm{\tilde v} \rrbracket_{ij}^T \bm{f}^r_s({\bm{\tilde u}}_i,{\bm{\tilde u}}_j),
        \label{eq:entropy_eqn}
    \end{equation}
where $\bm{f}^r_s({\bm{\tilde u}}_i,{\bm{\tilde u}}_j)$ is the entropy-conserving flux evaluated with entropy-projected conservative variables, $\bm{\tilde u}=\bm{u}(\bm{\tilde v})$. Chan then applies the Tadmor shuffle condition to replace the change in entropy variables and the two‑point flux, which allows the entropy potential on the surface to be recovered through integration‑by‑parts~\cite[Eq.(72)–(73)]{chan2018discretely}. This substitution is feasible only because the two‑point flux had been constructed using the entropy‑projected variables{\color{black}, also highlighted in~\cite{parsaniEntropyStableStaggered2016}}. Since the Tadmor shuffle condition depends on the entropy variables in Eq.~\eqref{eq:entropy_vars}, the two‑point flux must satisfy
$$\llbracket \bm{\tilde v} \rrbracket_{ij}^T \bm{f}^r_s({\bm{\tilde u}}_i,{\bm{\tilde u}}_j) = \llbracket \bm{\psi}^r \rrbracket_{ij},$$
which requires mapping the conservative variables used in $\bm{f}_s$ from $\bm{v}$. Performing this substitution for the spatial terms in~\eqref{eq:entropy_eqn} yields 
\begin{equation}
    \begin{aligned}
    \bm{\tilde v}    
    \left[
    (\tilde{\mathcal{Q}} - \tilde{\mathcal{Q}}^T)_{s}
    \odot 
    \mathcal{F}^r_{s} \right]
    \bm{1}^T 
    + & \bm{\hat v} \bm{\chi}(\bar{\bm{\xi}}_{f})^T\mathcal{W}_f \ \text{diag} (\bm{\hat{n}}_s) \bm{f}^{*,r}_s \\
     =& -\bm{1} \mathcal{W}_f \text{diag} (\bm{\hat{n}}_s) \left[ \bm{\psi}^r (\bm{\tilde u}_f) - \bm{\tilde v}^T \bm{f}^{*,r}_s\right] \\
     \approx & - \int_{-1}^{1} \int_{\Gamma_s^r}\left[ \bm{\psi}^r (\bm{u}(\Pi\bm{v})) - (\Pi\bm{v})^T \bm{f}^{*,r}_s\right] \cdot \bm{\hat{n}}_s \;\mathrm{d}\Gamma_s^r \mathrm{d}\tau,
    \end{aligned}
\end{equation}
{\color{black}approximating conservation of entropy across the spatial boundaries $\Gamma^r_s$ on the reference time interval, written for generality with higher spatial dimensions.}

Having reduced the spatial terms to a surface integral, we now turn our attention to the temporal terms.  
Considering the volume temporal term, we note that it can be readily transposed,
\begin{equation}
        {\bm{\tilde v}}
        \left[
        (\tilde{\mathcal{Q}} - \tilde{\mathcal{Q}}^T)_{t}
        \odot 
        \mathcal{F}^r_{t} \right]
        \bm{1}^T 
        =
        {\bm{\tilde v}}
        \left[
        \tilde{\mathcal{Q}}_t
        \odot 
        \mathcal{F}^r_{t} \right]
        \bm{1}^T 
        -
        \bm{1}
        \left[
        \tilde{\mathcal{Q}}_t
        \odot 
        \mathcal{F}^r_{t} \right]
        {\bm{\tilde v}}^T 
\end{equation}

    We assume that the temporal two-point flux is an energy-stable flux $\bm{f}^r_t$. Noting that $\llbracket\bm{\tilde v}\rrbracket^T \bm{f}^r_t = \llbracket\bm{\phi}^r\rrbracket$ 
    {as shown in Section~\ref{sec:entropy-stability-CTS}}, we expand on all quadrature nodes 
    to demonstrate
    \begin{equation}
    \begin{aligned}
        {\bm{\tilde v}}
        \left[
        \tilde{\mathcal{Q}}_t
        \odot 
        \mathcal{F}^r_{t} \right]
        \bm{1}^T 
        -
        \bm{1}
        \left[
        \tilde{\mathcal{Q}}_t
        \odot 
        \mathcal{F}^r_{t} \right]
        {\bm{\tilde v}}^T  
        &= \sum_{i,j}^{N_q} \tilde{Q}_{t,ij} \llbracket \bm{\tilde v} \rrbracket_{ij}^T \bm{f}^r_t(\bm{\tilde u}_i,\bm{\tilde u}_j) = \sum_{i,j}^{N_q} \tilde{Q}_{t,ij} \llbracket{\bm{\phi}^r}\rrbracket_{ij} \\
        &= \bm{1}
        \tilde{\mathcal{Q}}_t
        {\bm{\phi}}^{r^T} 
        -
        {\bm{\phi}}^r
        \tilde{\mathcal{Q}}_t
        {\bm{1}}^T .
    \end{aligned}
    \end{equation}

    Because of the properties $\tilde{\mathcal{Q}}_t + \tilde{\mathcal{Q}}_t^T = \mathcal{B}_t$ and $\tilde{\mathcal{Q}}_t \bm{1}^T=\bm{0}^T$~\cite{chan2019skew},

    \begin{equation}
    \begin{aligned}
        {\bm{\phi}}^r
        \tilde{\mathcal{Q}}_t
        {\bm{1}}^T
        -
        \bm{1}
        \tilde{\mathcal{Q}}_t
        {\bm{\phi}}^{r^T} 
         = -\bm{1} (\tilde{\mathcal{B}}_t-\tilde{\mathcal{Q}}_t^T)\bm{\phi}^{r^T}
         = -\bm{1} \tilde{\mathcal{B}}_t\bm{\phi}^{r^T}.
    \end{aligned}
    \end{equation}
Thus, the volume term is expressed as a discrete integral of the entropy potential on the boundary of the element,
\begin{equation}
     {\bm{\tilde v}}
        \left[
        (\tilde{\mathcal{Q}} - \tilde{\mathcal{Q}}^T)_{t}
        \odot 
        \mathcal{F}^r_{t} \right]
        \bm{1}^T
        = -\bm{1} \tilde{\mathcal{B}}\bm{\phi}^{r^T}
        = -\bm{1}\mathcal{W}_{f} \ \text{diag}(\bm{\hat{n}}) \bm{\phi}^r(\bm{\tilde u}_f)
     \approx \int_{\Omega_s^r}  \left[ \bm{\phi}^r (\bm{u}(\Pi\bm{v}))  \cdot \bm{\hat{n}}_t \right]_{-1}^1\; \mathrm{d}\Omega_s^r
\end{equation}

We now {recombine the spatial and temporal parts to} express the final statement of the entropy equation within a space-time element,
\begin{equation}
    \bm{1}\mathcal{W}_{f} \text{diag}(\bm{\hat{n}}_t)\left[ \bm{\phi}^r(\bm{\tilde u}_f) - \bm{\tilde v}_f^T \bm{f}^{*,r}_t\right]
    +\bm{1} \mathcal{W}_f \text{diag} (\bm{\hat{n}}_s) \left[ \bm{\psi}^r (\bm{\tilde u}_f) - \bm{\tilde v}^T \bm{f}^{*,r}_s\right] = 0.
\end{equation}

\end{proof}

\begin{theorem}
\label{thm:entropypreservation}
Let $\mathcal{F}_{s}$ be an entropy conservative spatial flux and $\mathcal{F}_{t}$ be an entropy conservative temporal flux, both also used as the numerical flux, {\color{black}and let the correction parameter be $c_{DG}$}. Then consider the space-time entropy equation from Theorem~\ref{thm:entropyeqn} with Dirichlet boundary conditions in time and periodic boundary conditions in space. The space-time NSFR is then entropy preserving,
\begin{equation}
    s(\bm{\tilde u}(T)) -s(\bm{u}_0) 
    + \sum_{k=1}^{K_s} \bm{1}\mathcal{W}_{f} \mathrm{diag}(\bm{\hat{n}}^{-}_t) 
    \Bigl[ 
    \llbracket \bm{\phi}^r \rrbracket - \llbracket \bm{\tilde v}\rrbracket^T \bm{f}^{*,r}_t (\bm{\tilde u}^{-},\bm{\tilde u}^{+}) \Bigr] \Bigg|_{t=0} = 0,
    \label{eq:entropyconservation}
\end{equation}
{with initial numerical entropy calculated as 
\begin{equation*}
    s(\bm{u}_0) = \sum_{n=1}^{K_s} \bm{1} \mathcal{W}_{f,3} \mathrm{diag} (\bm{\hat{n}}^-_t) {J_{1D}}\bm{s}(\bm{u}_0)
\end{equation*}
and likewise for final numerical entropy.}
\end{theorem}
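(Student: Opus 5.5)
Starting from the element-wise entropy equation of Theorem~\ref{thm:entropyeqn}, I would sum over all space-time elements of the domain and show that every interior face contribution cancels. What survives should be the outflow face at $t=T$ and the Dirichlet face at $t=0$. I would handle the spatial and temporal face terms separately, in that order, and then fold the boundary contributions into the numerical entropy expressions defined in the statement.

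\textbf{Spatial faces.} Within each timeslab, consider an interior spatial face shared by elements $k$ and $k+1$. The two contributions are $\bm{1}\mathcal{W}_f\mathrm{diag}(\bm{\hat n}_s)\bigl[\bm{\psi}^r(\bm{\tilde u}_f)-\bm{\tilde v}^T\bm{f}^{*,r}_s\bigr]$ from each side, with opposite normals. Their sum is $\bm{1}\mathcal{W}_f\bigl[\llbracket\bm{\psi}^r\rrbracket-\llbracket\bm{\tilde v}\rrbracket^T\bm{f}^{*,r}_s\bigr]$ up to sign. Because the numerical flux is the entropy-conservative two-point flux $\bm{f}_s(\bm{\tilde u}^-,\bm{\tilde u}^+)$, the spatial Tadmor condition \eqref{eq:Tadmor-shuffle-space} makes this vanish pointwise at each face node. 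Three facts are used here: the face nodes and weights coincide on both sides, the Cartesian $J_{1D}$ is identical, and $\bm{f}^{*,r}_s$ is single-valued. Periodicity treats the faces at $x_L$ and $x_R$ as an interior pair.

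\textbf{Interior temporal faces.} Next I would treat the faces between timeslabs in the same way. There the jump term is $\llbracket\bm{\phi}^r\rrbracket-\llbracket\bm{\tilde v}\rrbracket^T\bm{f}^{*,r}_t$, and it vanishes by the temporal condition \eqref{eq:Tadmor-shuffle} because the temporal numerical flux is the entropy-conservative $\bm{f}_t$.

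\textbf{Boundary faces.} Only the $t=T$ and $t=0$ faces remain, and these need care.
\begin{itemize}
\item At $t=T$ the outflow flux is upwind, so $\bm{f}^{*,r}_t=J_{1D}\bm{\tilde u}_f$ from the interior. The term $\bm{\phi}^r-\bm{\tilde v}^T\bm{\tilde u}$ then equals $-J_{1D}\,s(\bm{\tilde u})$, using $\phi=\bm{v}^T\bm{u}-s$ scaled by $J_{1D}$. With $\hat n_t=+1$, this produces $-s(\bm{\tilde u}(T))$ in the summed weighted form of the statement. This holds exactly provided $\bm{u}(\bm{\tilde v})=\bm{\tilde u}$ at the face nodes, which is true by the entropy projection \eqref{eq:entropy-projection}.
\item At $t=0$ the exterior state is the Dirichlet data $\bm{u}_0$. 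I would add and subtract the exterior quantity $\bm{\phi}^r(\bm{u}_0)-\bm{v}(\bm{u}_0)^T J_{1D}\bm{u}_0=-J_{1D}s(\bm{u}_0)$. This splits the face contribution into two parts. The first is the initial numerical entropy $s(\bm{u}_0)$, carrying the normal $\bm{\hat n}^-_t$ as in the statement's definition. The second is a jump term $\llbracket\bm{\phi}^r\rrbracket-\llbracket\bm{\tilde v}\rrbracket^T\bm{f}^{*,r}_t(\bm{\tilde u}^-,\bm{\tilde u}^+)$.
\end{itemize}
Multiplying the total by $-1$ and collecting terms gives \eqref{eq:entropyconservation}.

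\textbf{Main obstacle.} I expect the $t=0$ bookkeeping to be the hardest step. The signs of $\bm{\hat n}^-_t$ must be consistent, and the residual jump term must appear with the correct orientation rather than cancelling. The face there is not a true two-sided interior face: the exterior state is fixed data, not a projected entropy variable. To handle this I would define $\bm{\tilde u}^-=\bm{u}_0$ and $\bm{\tilde v}^-=\bm{v}(\bm{u}_0)$ explicitly, and check that the resulting term is exactly the bracket in \eqref{eq:entropyconservation}. I would also check the outflow identity at $t=T$ carefully, because the statement requires the entropy-conservative flux as numerical flux while Theorem~\ref{thm:ESFR-conservation} requires upwind at the boundaries. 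I would reconcile the two by taking the temporal boundary fluxes as upwind/Dirichlet and the interior ones as entropy-conservative, following Friedrich \textit{et al.}
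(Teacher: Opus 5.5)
Your proposal is correct and follows the paper's proof. You sum the element entropy equations of Theorem~\ref{thm:entropyeqn}, cancel the spatial interfaces (including the periodic pair) with the spatial Tadmor condition and the interior temporal interfaces with the temporal one, use upwinding at $t=T$ together with $\phi=\bm{v}^T\bm{u}-s$ to obtain $s(\bm{\tilde u}(T))$, and at $t=0$ add and subtract the Dirichlet-state quantities to produce $s(\bm{u}_0)$ plus the projection-error jump term. The only difference is bookkeeping: the paper first adds and subtracts the consistent state $\bm{f}^r_t(\bm{\tilde u}_f)$ on every temporal face and recombines the pieces at interior interfaces, whereas you apply the temporal shuffle condition directly to each interior face pair, which is slightly cleaner.
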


\begin{proof}
{\color{black}First, we express the interpolation of the temporal flux to the boundaries of a single space-time element as
\begin{equation}
\begin{aligned}
        \bm{\tilde v}[\mathcal{B}_t \odot \mathcal{F}^r_{t}] \bm{1}^T
        &= \bm{\tilde{v}}\left(\begin{bmatrix}
            0 & 0 \\
            0 & \mathcal{W}_f \ \text{diag}(\bm{\hat{n}}_t)
        \end{bmatrix} \odot \mathcal{F}^r_{t}\right)
        \bm{1}^T \\
        &= \bm{\hat v}\bm{\chi}(\bar{\bm{\xi}}_{f})^T\mathcal{W}_f \ \text{diag} (\bm{\hat{n}}_t) \bm{f}^r_t(\tilde{\bm{u}}).
    \end{aligned}
\end{equation}
We add and subtract the temporal flux on the boundaries to Eq.~\eqref{eq:entropyeqn}. The temporal terms on a single space-time element are therefore rearranged to reveal,
\begin{equation}
\begin{aligned}    
    &-\bm{1}\mathcal{W}_{f} \text{diag}(\bm{\hat{n}}_t)\left[ \bm{\phi}^r(\bm{\tilde u}_f) - \bm{\tilde v}_f^T \bm{f}^r_t(\bm{\tilde u}_f)\right] + \bm{1}\mathcal{W}_{f} \ \text{diag}(\bm{\hat{n}}_t) \bm{\tilde v}_f^T \left[ \bm{f}^{*,r}_t -  \bm{f}^r_t(\bm{\tilde u}_f)\right] \\
    &-\bm{1} \mathcal{W}_f \text{diag} (\bm{\hat{n}}_s) \left[ \bm{\psi}^r (\bm{\tilde u}_f) - \bm{\tilde v}^T \bm{f}^{*,r}_s\right] = 0.
\end{aligned}
\end{equation}}
The proof for global entropy conservation then proceeds with a summation over all $K_s$-space-time elements and $K_t$-timeslabs {\color{black}numbered per Section \ref{sec:elem-numbering}}. From Eq.~\eqref{eq:entropyeqn}, we have
    \begin{equation}
    \begin{aligned}        
        \sum_{n=1}^{K_T} \sum_{k=1}^{K_s}     & \Bigl[ -\bm{1}\mathcal{W}_{f} \text{diag}(\bm{\hat{n}}_t)\left[ \bm{\phi}^r(\bm{\tilde u}_f) - \bm{\tilde v}_f^T \bm{f}^r_t(\bm{\tilde u}_f)\right] + \bm{1}\mathcal{W}_{f} \ \text{diag}(\bm{\hat{n}}_t) \bm{\tilde v}_f^T \left[ \bm{f}^{*,r}_t -  \bm{f}^r_t (\bm{\tilde u}_f)\right] \\
    &-\bm{1} \mathcal{W}_f \text{diag} (\bm{\hat{n}}_s) \left[ \bm{\psi}^r (\bm{\tilde u}_f) - \bm{\tilde v}^T \bm{f}^{*,r}_s\right] \Bigr] = 0.
    \end{aligned}
    \label{eq:sumofallspacetimeelements}
    \end{equation}
    
\noindent We now impose fully periodic boundary conditions in space, expand the term having spatial interface contributions across each timeslab, and substitute $\bm{\hat{n}}^+_s = -\bm{\hat{n}}^-_s$. We note that summation indices are omitted for notational brevity and the `$+,-$', notation is employed instead to denote exterior and interior entropy-projected conservative states across the spatial interface, 
    \begin{equation}
    \begin{aligned}
    \sum_{n=1}^{K_T} \sum_{k=1}^{K_s} -\bm{1} \mathcal{W}_f \text{diag} (\bm{\hat{n}}_s) &[ \bm{\psi}^r (\bm{\tilde u}_f) - \bm{\tilde v}^T \bm{f}^{*,2}_s ] = \sum_{n=1}^{K_T} \Bigl[ \cdots
     +\bm{1} \mathcal{W}_f \text{diag} (\bm{\hat{n}}^-_s) \left[\bm{\psi}^r (\bm{\tilde u}^+_f) -\bm{\psi}^r (\bm{\tilde u}^-_f)\right] \\
    & - \bm{1}\mathcal{W}_f \text{diag} (\bm{\hat{n}}^-_s) \left( (\bm{\tilde v}^+_f )^T - (\bm{\tilde v}^-_f )^T \right) \bm{f}^r_s (\bm{\tilde u}^+_f, \bm{\tilde u}^-_f) + \cdots \Bigr].
    \end{aligned}
    \end{equation}
    Through the application of Tadmor's shuffle condition {\color{black} across all spatial interfaces, including the periodic boundaries}, $$\left( (\bm{\tilde v}^+_f )^T - (\bm{\tilde v}^-_f )^T \right) \bm{f}_s (\bm{\tilde u}^+_f, \bm{\tilde u}^-_f) = \bm{\psi}^r (\bm{\tilde u}^+_f) -\bm{\psi}^r (\bm{\tilde u}^-_f),$$ the summation of the spatial interface contributions across all space-time elements and timeslabs cancels {all interface terms} {\color{black} and only temporal terms remain in Eq. \eqref{eq:sumofallspacetimeelements}.} 

    We now turn our attention to the first boundary term across the temporal interfaces and impose temporal Dirichlet initial boundary conditions. From Eq.~\eqref{eq:sumofallspacetimeelements}, we unroll and assemble all temporal face terms, substitute $\bm{\hat{n}}^+_t = -\bm{\hat{n}}^-_t$, and now denote the `$+,-$', notation as exterior and interior entropy-projected conservative states across the temporal interfaces,
    \begin{equation}
        \begin{aligned}
            \sum_{n=1}^{K_T} \sum_{k=1}^{K_s} -\bm{1}\mathcal{W}_{f} \text{diag}(\bm{\hat{n}}_t) & \left[ \bm{\phi}^r(\bm{\tilde u}_f) - \bm{\tilde v}_f^T \bm{f}^r_t(\bm{\tilde u}_f)\right] \\= &\sum_{n=1}^{K_s} \Bigl[ \cdots
     +\bm{1} \mathcal{W}_f \text{diag} (\bm{\hat{n}}^-_t) \left[(\bm{\tilde v}^{-}_f )^T\bm{f}^r_t(\bm{\tilde u}^{-}_f) -\bm{\phi}^r(\bm{\tilde u}^-_f)\right]\Bigg|_{t=T} \\
     & \cdots - \bm{1}\mathcal{W}_f \text{diag} (\bm{\hat{n}}^-_t) \left[(\bm{\tilde v}^{+}_f )^T\bm{f}^r_t(\bm{\tilde u}^{+}_f) -(\bm{\tilde v}^{-}_f )^T\bm{f}^r_t(\bm{\tilde u}^{-}_f) +\bm{\phi}^r(\bm{\tilde u}^+_f) -\bm{\phi}^r(\bm{\tilde u}^-_f)\right] \\
     &\cdots -\bm{1} \mathcal{W}_f \text{diag} (\bm{\hat{n}}^-_t) \left[(\bm{\tilde v}^{+}_f )^T\bm{f}^r_t(\bm{\tilde u}^{+}_f) -\bm{\phi}^r(\bm{\tilde u}^+_f)\right]\Bigg|_{t=0} \Bigr].
        \end{aligned}
    \end{equation}
    {\color{black}The terms across interior interfaces cancel, leaving only terms at the $t=0$ and $t=T$ boundaries.}
    Through the definition of the entropy/entropy-flux potential pair $\phi = \bm{v}^T\bm{u} -s$, we can express the above expansion as,
    \begin{equation}
        \begin{aligned}
            \sum_{n=1}^{K_T} \sum_{k=1}^{K_s} -\bm{1}\mathcal{W}_{f} \text{diag}(\bm{\hat{n}}_t) & \left[ \bm{\phi}^r(\bm{\tilde u}_f) - \bm{\tilde v}_f^T \bm{f}^r_t(\bm{\tilde u}_f)\right] \\ &= \sum_{n=1}^{K_s} \Bigl[ \bm{1} \mathcal{W}_f \text{diag} (\bm{\hat{n}}^-_t) \bm{s}(\bm{\tilde u}^{-}_f)\Big|_{t=T} -\bm{1} \mathcal{W}_f \text{diag} (\bm{\hat{n}}^-_t) s(\bm{\tilde u}^{+}_f)\Big|_{t=0} \Bigr] \\
            & = \bm{s}(\bm{\tilde u}(T)) - s(\bm{\tilde u}(0)) 
        \end{aligned}
    \end{equation}
We now turn our attention to the second temporal boundary term, and as such, with the substitution of $\bm{\hat{n}}^+_t = -\bm{\hat{n}}^-_t$, the second term from Eq.~\eqref{eq:sumofallspacetimeelements} can be written as 
    \begin{equation}
        \begin{aligned}
            \sum_{n=1}^{K_T} \sum_{k=1}^{K_s}     &\;  \bm{1}\mathcal{W}_{f} \ \text{diag}(\bm{\hat{n}}_t) \bm{\tilde v}_f^T \left[ \bm{f}^{*,r}_t -  \bm{f}^r_t (\bm{\tilde u}_f)\right] \\=& \sum_{n=1}^{K_s} \Bigl[ \cdots
     +\bm{1} \mathcal{W}_f \text{diag} (\bm{\hat{n}}^-_t) (\bm{\tilde v}^{-}_f )^T\left[\bm{f}^{*,r}_t(\bm{\tilde u}^{-}_f,\bm{\tilde u}^{+}_f) -\bm{f}^r_t(\bm{\tilde u}^-_f)\right]\Bigg|_{t=T} \\
     & \cdots - \bm{1}\mathcal{W}_f \text{diag} (\bm{\hat{n}}^-_t) \left[\left((\bm{\tilde v}^{+}_f )^T -(\bm{\tilde v}^{-}_f )^T\right)\bm{f}^{*,r}_t(\bm{\tilde u}^{-}_f,\bm{\tilde u}^{+}_f) -\left( (\bm{\tilde v}^{+}_f )^T\bm{f}^r_t(\bm{\tilde u}^{+}_f) -(\bm{\tilde v}^{-}_f )^T\bm{f}^r_t(\bm{\tilde u}^{-}_f) \right) \right] \\
     &\cdots -\bm{1} \mathcal{W}_f \text{diag} (\bm{\hat{n}}^-_t) (\bm{\tilde v}^{+}_f )^T \left[\bm{f}^{*,r}_t(\bm{\tilde u}^{-}_f,\bm{\tilde u}^{+}_f) -\bm{f}^r_t(\bm{\tilde u}^+_f)\right]\Bigg|_{t=0} \Bigr]
        \end{aligned}
        \label{eq:expandedtemporalterm}
    \end{equation}    
At this stage, we introduce Dirichlet boundary conditions for the initial condition at $t=0$, pure upwinding at the final time, $t=T$, as it is the natural choice, and ensure that the temporal numerical flux, $\bm{f}^*_t$ satisfies the temporal equivalent of the Tadmor shuffle condition of Eq.~\eqref{eq:Tadmor-shuffle}. These conditions can be written as
\begin{equation}
    \begin{aligned}
        &\bm{f}^{*,r}_t(\bm{\tilde u}^{-}_f,\bm{\tilde u}^{+}_f)\Big|_{t=T} = \bm{\tilde u}^{-}_f,  \qquad \bm{f}^{*,r}_t(\bm{\tilde u}^{-}_f,\bm{\tilde u}^{+}_f)\Big|_{t=0} = \bm{u}_0 \\
        & \left((\bm{\tilde v}^{+}_f )^T -(\bm{\tilde v}^{-}_f )^T\right)\bm{f}^{*,r}_t(\bm{\tilde u}^{-}_f,\bm{\tilde u}^{+}_f) = \bm{\phi}^r(\bm{\tilde u}^{+}_f) -\bm{\phi}^r(\bm{\tilde u}^{-}_f) \quad \text{for all internal temporal interfaces}.
    \end{aligned}
    \label{eq:temporalentropycons_bc}
\end{equation}   
At the temporal interface $t=T$, the boundary integral vanishes due to the pure upwinding condition. We can now assemble the final form of Eq.~\eqref{eq:sumofallspacetimeelements} {\color{black}by combining the reduced forms of the two temporal terms},
    \begin{equation}
    \begin{aligned}        
        \sum_{n=1}^{K_T} \sum_{k=1}^{K_s}     & \Bigl[ -\bm{1}\mathcal{W}_{f} \text{diag}(\bm{\hat{n}}_t)\left[ \bm{\phi}^r(\bm{\tilde u}_f) - \bm{\tilde v}_f^T \bm{f}^r_t(\bm{\tilde u}_f)\right] + \bm{1}\mathcal{W}_{f} \ \text{diag}(\bm{\hat{n}}_t) \bm{\tilde v}_f^T \left[ \bm{f}^{*,r}_t -  \bm{f}^r_t (\bm{\tilde u}_f)\right] \\
    &-\bm{1} \mathcal{W}_f \text{diag} (\bm{\hat{n}}_s) \left[ \bm{\psi}^r (\bm{\tilde u}_f) - \bm{\tilde v}^T \bm{f}^{*,r}_s\right] \Bigr] \\ &= s(\bm{\tilde u}(T)) - s(\bm{\tilde u}(0)) + \sum_{n=1}^{K_s} \Bigl[
    \bm{1} \mathcal{W}_f \text{diag} (\bm{\hat{n}}^-_t) (\bm{\tilde v}^{+}_f )^T \left[\bm{\tilde u}^{+}_f -\bm{\tilde u}^-_f\right]\Bigg|_{t=0} \Bigr] = 0.
    \end{aligned}
    \end{equation}
    
    {\color{black}We seek a form which relates the final entropy $s(\bm{\tilde u}(T))$ to the initial condition applied at $t=0$, $s(\bm{u}_0)$.} To arrive at such a statement, we add and subtract the initial entropy, $s(\bm{u}_0)$, and the product $(\bm{\tilde v}^-_f)^T\bm{\tilde u}^-_f$ for the exterior values for the first temporal slab at the $t=0$ interface,
    \begin{equation}
    \begin{aligned}    
    s(\bm{\tilde u}(T)) - s(\bm{u}_0) &+ \sum_{n=1}^{K_s} 
    \bm{1} \mathcal{W}_f \text{diag} (\bm{\hat{n}}^-_t) \Bigl[(\bm{\tilde v}^{+}_f )^T \bm{\tilde u}^{+}_f 
    -(\bm{\tilde v}^{-}_f )^T \bm{\tilde u}^{-}_f
    \\ &
    -\left( s(\bm{\tilde u}^{+}_f) -s(\bm{u}_0)\right)
    -\left((\bm{\tilde v}^{+}_f )^T -(\bm{\tilde v}^{-}_f )^T\right)\bm{\tilde u}^{-}_f
     \Bigr]\Bigg|_{t=0} = 0.
    \end{aligned}
    \end{equation}
Through another application of the entropy/entropy-flux potential pair, we arrive at the final form,
\begin{equation}
    s(\bm{\tilde u}(T)) -s(\bm{u}_0) 
    + \sum_{k=1}^{K_s} \bm{1}\mathcal{W}_{f} \textup{diag}(\bm{\hat{n}}^{-}_t) 
    \Bigl[ \llbracket \bm{\phi}^r \rrbracket - \llbracket \bm{\tilde v}_f\rrbracket^T \bm{f}^{*,r}_t (\bm{\tilde u}^{-}_f,\bm{\tilde u}^{+}_f) \Bigr] \Bigg|_{t=0} = 0.
\end{equation}
From~\cite{friedrich2019entropy}, the final term above is designated as the projection error due to the enforcement of the Dirichlet boundary condition. 
\end{proof}

\begin{remark}
    The projection error is evaluated by taking $\bm{f}^{*,r}_t (\bm{\tilde u}^{-}_f,\bm{\tilde u}^{+}_f)\Big|_{t=0} = \bm{u}_0$, where $\bm{u}_0$ is evaluated at the temporal face quadrature points; while the exterior entropy variable, $\bm{\tilde v}^{+}_f$ at $t=0$ is evaluated using the same $\bm{u}_0$ and $\bm{\phi}(\bm{\tilde u}^{+}_f) $. 
\end{remark}

\begin{theorem}
\label{thm:entropystability}
Let $\mathcal{F}_{s}$ be an entropy conservative spatial flux and $\mathcal{F}_{t}$ be an entropy conservative temporal flux, while a pure upwinding is considered for the temporal numerical flux, $\bm{f}^*_t$ and an entropy-stable numerical flux at the spatial boundaries, $\bm{f}^*_s$. Then consider the space-time entropy equation from Theorem~\ref{thm:entropyeqn} with Dirichlet boundary conditions in time and periodic boundary conditions in space, then the space-time NSFR with $c_{DG}$ is entropy-stable,
\begin{equation}
s(\bm{\tilde u}(T)) \le s(\bm{u}_0)    
\label{eq:statement-of-entropy-stability}
\end{equation}
\end{theorem}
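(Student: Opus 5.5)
The plan is to reuse the per-element entropy equation of Theorem~\ref{thm:entropyeqn} and the global summation in the proof of Theorem~\ref{thm:entropypreservation}. The difference is that the interface numerical fluxes are no longer entropy conservative. Each interface contribution is then written as an exact Tadmor-shuffle part plus a remainder whose sign is controlled. Starting from \eqref{eq:sumofallspacetimeelements}, I sum over all $K_s$ elements and $K_t$ timeslabs and treat the spatial and temporal interface terms separately.

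\textbf{Spatial interfaces.} At each spatial interface, including the periodic boundary, the contribution is
\[
\bm{1}\mathcal{W}_f\,\mathrm{diag}(\bm{\hat n}^-_s)\bigl[\llbracket\bm{\psi}^r\rrbracket-\llbracket\bm{\tilde v}_f\rrbracket^T\bm{f}^{*,r}_s\bigr].
\]
I will use the defining property of an entropy-stable flux, $\bm{f}^{*}_s=\bm{f}^{EC}_s-\Lambda\llbracket\bm{\tilde v}\rrbracket$ with $\Lambda$ symmetric positive semidefinite, or equivalently $\llbracket\bm{\tilde v}\rrbracket^T\bm{f}^*_s\le\llbracket\bm{\psi}\rrbracket$. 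The Tadmor shuffle condition \eqref{eq:Tadmor-shuffle-space} cancels the $\bm{f}^{EC}_s$ part exactly, as in Theorem~\ref{thm:entropypreservation}. What remains is $+\llbracket\bm{\tilde v}\rrbracket^T\Lambda\llbracket\bm{\tilde v}\rrbracket$, weighted by positive face quadrature weights, and this enters the balance as a non-negative dissipation term.

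\textbf{Temporal interfaces.} Here pure upwinding gives $\bm{f}^{*,r}_t(\bm{\tilde u}^-,\bm{\tilde u}^+)=\bm{\tilde u}^-$ from the earlier timeslab. At an interior temporal interface I regroup the terms of \eqref{eq:expandedtemporalterm} together with the $\bm{\phi}$ terms. The contribution becomes $\llbracket\bm{\phi}\rrbracket-\llbracket\bm{\tilde v}\rrbracket^T\bm{\tilde u}^{-}$. Using $\phi=\bm{v}^T\bm{u}-s$, this equals
\[
s(\bm{\tilde u}^-)-s(\bm{\tilde u}^+)-\bm{v}(\bm{\tilde u}^+)^T(\bm{\tilde u}^- - \bm{\tilde u}^+).
\]
This is the Bregman divergence $D_s(\bm{\tilde u}^-,\bm{\tilde u}^+)$ of the convex entropy, so it is non-negative. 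It is the discrete jump dissipation of DG in time.

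\textbf{Boundaries and assembly.} At $t=T$ upwinding makes the flux-difference term vanish, exactly as in Theorem~\ref{thm:entropypreservation}. At $t=0$ the Dirichlet data $\bm{u}_0$ enter as the upwind state. The same manipulation turns the projection-error term of \eqref{eq:entropyconservation} into $-D_s(\bm{u}_0,\bm{\tilde u}^+_f)$, a Bregman divergence between the data and the interior trace. Collecting everything gives
\[
s(\bm{\tilde u}(T))-s(\bm{u}_0)=-\bigl(\text{non-negative spatial dissipation}\bigr)-\bigl(\text{non-negative temporal jump dissipation}\bigr),
\]
which proves \eqref{eq:statement-of-entropy-stability}.

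\textbf{Main obstacle.} The hardest step is the sign bookkeeping at the temporal interfaces, in particular at $t=0$. There I must check that the upwind choice pairs $\bm{v}$ evaluated at the interior (later) state with the exterior (earlier) state $\bm{u}$. The divergence then comes out as $D_s(\text{earlier},\text{later})\ge0$ rather than a quantity of indefinite sign. This requires the normal orientation $\bm{\hat n}^+_t=-\bm{\hat n}^-_t$ to be handled consistently.

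A second subtlety is that the convexity argument needs both $\bm{\tilde u}^\pm$ to be genuine conservative states. This holds because they are obtained from the entropy projection \eqref{eq:entropy-projection}, and $\bm{u}(\bm{v})$ is well defined for any $\bm{v}$ in the range of $\partial s/\partial\bm{u}$. I would first verify the scalar case $s=u^2/2$, where the temporal dissipation reduces to $\tfrac12\llbracket u\rrbracket^2$.
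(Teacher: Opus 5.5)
Your proof is correct and follows the paper's route. You sum the element identity of Theorem~\ref{thm:entropyeqn}, use the entropy-stable $\bm{f}^{*}_s$ to make every spatial interface term non-negative, and use pure upwinding to make every interior temporal interface term and the $t=0$ projection term non-negative, so that $s(\tilde{\bm{u}}(T))-s(\bm{u}_0)$ is minus a sum of non-negative quantities. The only difference is in detail: you prove $\llbracket\bm{\phi}^r\rrbracket\ge\llbracket\tilde{\bm{v}}\rrbracket^T\bm{f}^{*,r}_t$ directly as the Bregman divergence $D_s(\text{earlier},\text{later})\ge 0$, and the spatial sign via $\bm{f}^{EC}_s-\Lambda\llbracket\tilde{\bm{v}}\rrbracket$, where the paper cites Friedrich et al.\ and Cicchino et al.\ (note also that, standing next to $s(\tilde{\bm{u}}(T))-s(\bm{u}_0)$ in \eqref{eq:entropyconservation}, the projection term is $+D_s(\bm{u}_0,\tilde{\bm{u}}^+_f)$; your $-D_s$ is its value only after it is moved to the right-hand side, as your final assembly correctly does).
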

\begin{proof}
With an appropriate choice of an entropy-stable spatial numerical flux, the baseline NSFR spatial discretization has been shown in~\cite{cicchino2025discretely} to be entropy-stable. As for the temporal terms, entropy stability hinges on ensuring that the second term in Eq.~\eqref{eq:sumofallspacetimeelements} remains positive. It remains to be shown that contributions to the internal temporal interfaces, as shown in Eq.~\eqref{eq:temporalentropycons_bc}, as well as the projection term at $t=0$ are non-negative. 

Friedrich \textit{et al.}~\cite{friedrich2019entropy} demonstrated that using pure upwinding in the temporal direction, specified as 
\color{black}
\begin{equation}
    \begin{aligned}
        &\bm{f}^{*,r}_t(\bm{\tilde u}^{-}_f,\bm{\tilde u}^{+}_f)\Big|_{t=T} = \bm{\tilde u}^{-}_f,  \qquad \bm{f}^{*,r}_t(\bm{\tilde u}^{-}_f,\bm{\tilde u}^{+}_f)\Big|_{t=0} = \bm{u}_0 \\
        & \bm{f}^{*,r}_t(\bm{\tilde u}^+_f,\bm{\tilde u}^-_f) = \bm{\tilde u}^-_f \quad \text{for all internal temporal interfaces};
    \end{aligned}
    \label{eq:temporalentropystab_bc}
\end{equation}   
the method fulfills
\begin{equation}
    \llbracket \bm{\phi}^r \rrbracket \ge \llbracket \bm{\tilde v} \rrbracket^T\bm{f}^{*,r}_t.
\label{eq:temporalTadmorCondition}
\end{equation}
Hence, both contributions from Eq.~\eqref{eq:temporalentropycons_bc} and Eq.~\eqref{eq:entropyconservation} remain non-negative. {\color{black} The node-agnostic formulation does not impact any conclusions therein}.
\end{proof}

We now extend the scheme and establish entropy stability for flux reconstruction in the case where the parameter $c$ is nonzero, and consequently, the matrix $\mathcal{K}$ is also nonzero. The discretization in Eq.~(\ref{eq:STNSFR}) for the space-time element is then left-multiplied by the product of the modal coefficients of the entropy variable and the FR modified mass matrix ${\bm{\hat v}}(\mathcal{M +K})$. Unlike the spatial terms, where the FR mass-matrix cancels the inverse that appears in Eq.~(\ref{eq:STNSFR}), the temporal term yields a residual term pre-multiplied by $\mathcal{KM}^{-1}$, which prevents a formal proof for entropy preservation and stability from being established. From the following analysis, we show that the additional term scales with the FR parameter $c$, and can be shown to be entropy-stable. We confirm the findings through numerical experiments.

The starting point of our analysis is the observation that all the temporal terms that contribute toward entropy preservation in Theorem~\ref{thm:entropypreservation} and entropy stability in Theorem~\ref{thm:entropystability} hinge on satisfying Eq.~\eqref{eq:temporalTadmorCondition}. 
That is, the right-hand-side  of Eq.~\eqref{eq:temporalTadmorCondition}  is the difference of two inner products in $L_2$. 
These terms arose when Eq.~\eqref{eq:STNSFR} was premultiplied by $\bm{\hat v} \mathcal{M}$. As such one of these terms can be expressed in inner-product form as,
\begin{equation}
\bm{\hat v} \mathcal{M} L_t\bm{f}^{*,r}_t \approx  \left< (\Pi \bm{v})^T, L_t\bm{f}^{*,r}_t \right>_{L^2(\Omega_s\times(0,T))} 
\end{equation}
However, in the case of FR, the inner-product appears as 
\begin{equation}
\bm{\hat v} (\mathcal{M+K}) L_t\bm{f}^{*,r}_t \approx  \left< (\Pi \bm{v})^T, L_t\bm{f}^{*,r}_t \right>_{H^k(\Omega_s\times(0,T))} 
\end{equation}

\begin{remark}    
In general, suppose the $L^2$ inner product is positive:
$$
\langle u,v\rangle_{L^2(\Omega_s)}
=
\int_{\Omega_s} u(x)v(x)\, dx
> 0,
$$
then nothing can be said about the sign of
$$
\langle u,v\rangle_{H^k(\Omega_s)}
=
\sum_{|\alpha|\le k}
\langle D^\alpha u, D^\alpha v\rangle_{L^2(\Omega_s)},
$$
and hence the derivative terms may change the sign. The same is true if the $L^2$ inner product is negative. Hence, to show that Eq.~\eqref{eq:temporalTadmorCondition} holds for FR, we must demonstrate that the sign of the existing inner products in $L^2$ remains as is for sufficiently small values of the flux-reconstruction parameter $c$. We establish this property in the following Theorem~\ref{Thm:innerproduct_in_Sobolev}{\color{black}, where we provide a proof for the case where the $L^2$ inner product is positive.}
\end{remark}
\begin{theorem}
\label{Thm:innerproduct_in_Sobolev}
There is a range of values for the flux-reconstruction parameter $c$ such that the inner product in the broken Sobolev space satisfies the following condition
\begin{equation}
\langle u,v\rangle_{H^k} \ge
\langle u,v\rangle_{L^2}
-
c\|D^p u\|_{L^2}\,\|D^p v\|_{L^2} \ge 0.   
\end{equation}

\end{theorem}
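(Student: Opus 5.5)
Write the broken Sobolev inner product of Definition~\ref{defineBrokenSobolevNorm} explicitly. Since $\mathcal{K}$ only retains the zeroth and $p$-th derivative contributions (Eq.~\eqref{eq:K_matrix}),
\begin{equation*}
\langle u,v\rangle_{H^k_c} = \langle u,v\rangle_{L^2} + c\,\langle D^p u, D^p v\rangle_{L^2}.
\end{equation*}
This identity is the discrete statement $\hat{\bm{u}}(\mathcal{M}+\mathcal{K})\hat{\bm{v}}^T = \hat{\bm{u}}\mathcal{M}\hat{\bm{v}}^T + c\,(\mathcal{D}^p_\xi\hat{\bm{u}}^T)^T\mathcal{M}(\mathcal{D}^p_\xi\hat{\bm{v}}^T)/J_{1D}$. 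The plan has two steps: first bound the derivative term from below in absolute value, then choose $c$ small enough that this perturbation cannot overturn the sign of the positive $L^2$ term. This is the case singled out in the preceding remark.

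\emph{First inequality.} Apply Cauchy--Schwarz in $L^2$ to the $p$-th derivative term, $|\langle D^p u, D^p v\rangle_{L^2}|\le \|D^p u\|_{L^2}\|D^p v\|_{L^2}$. Since $c\ge 0$, this gives
\begin{equation*}
c\langle D^p u, D^p v\rangle \ge -c\|D^p u\|\|D^p v\|.
\end{equation*}
Adding $\langle u,v\rangle_{L^2}$ to both sides yields the first inequality for every admissible $c$.

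\emph{Second inequality.} We need $c\|D^p u\|\|D^p v\|\le \langle u,v\rangle_{L^2}$. If either $D^p u$ or $D^p v$ vanishes, this holds for all $c\ge0$. Otherwise it holds precisely for
\begin{equation*}
0\le c\le c^\ast := \frac{\langle u,v\rangle_{L^2}}{\|D^p u\|_{L^2}\|D^p v\|_{L^2}},
\end{equation*}
and $c^\ast>0$ by the positivity assumption. This already proves the statement as posed, with the range $[0,c^\ast]$.

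\emph{Main obstacle: a data-independent range.} The range above depends on $u,v$. To make it depend only on $p$ and the element, use the fact that $u,v$ live in the finite-dimensional polynomial space spanned by $\bm{\chi}$. There the inverse (Markov-type) estimate holds:
\begin{equation*}
\|D^p w\|_{L^2}\le C_p\|w\|_{L^2},
\end{equation*}
with $C_p^2$ the largest eigenvalue of the generalized problem $(\mathcal{D}^p_\xi)^T\mathcal{M}\mathcal{D}^p_\xi\, x=\lambda\mathcal{M}x$. Using this bound gives
\begin{equation*}
c\|D^pu\|\|D^pv\|\le cC_p^2\|u\|\|v\|.
\end{equation*}
It therefore suffices that $cC_p^2\le \langle u,v\rangle/(\|u\|\|v\|)=\cos\theta_{uv}$.

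This still involves the angle between $u$ and $v$, so no uniform positive $c$ exists without a lower bound on that cosine. I would state this honestly: for the entropy application, $u=\Pi\bm{v}$ and $v=L_t\bm{f}^{*,r}_t$ satisfy a strict version of Eq.~\eqref{eq:temporalTadmorCondition}. That strict inequality gives $\cos\theta_{uv}\ge\delta>0$, which yields the explicit range $0\le c\le \delta/C_p^2$, with $C_p^2$ scaling like $O(p^{4p})$ on the reference element. Finally, I would note that $\mathcal{K}$ is built from $\mathcal{D}^p$ on a degree-$p$ space, so $D^p u$ is a constant per element. This collapses the bound to a single mode and keeps $C_p$ computable, which lets the range be checked against the numerically observed stability threshold (e.g.\ near $c_{Hu}$).
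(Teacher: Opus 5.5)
Your argument is correct and follows the paper's route: both split off the derivative contribution, bound it below by Cauchy--Schwarz, and take $c$ small under the positivity assumption on $\langle u,v\rangle_{L^2}$; the paper passes through the full $H^k$ sum before specializing to the $c$-weighted $p$-th derivative, whereas you work with the broken inner product directly and make the range explicit as $[0,c^\ast]$. Your data-independent extension goes beyond the paper, and its key step is unsupported: the temporal Tadmor inequality concerns face jumps, so even a strict version implies neither positivity of $\langle \Pi\bm{v}, L_t\bm{f}^{*,r}_t\rangle$ nor a uniform bound $\cos\theta_{uv}\ge\delta>0$, and that part should be dropped or stated as an assumption.
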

\begin{proof}

The $H^k$ inner product, using the standard equivalent norm, can be written as  
$$
\langle u,v\rangle_{H^k}
=
\sum_{|\alpha|\le k}
\langle D^\alpha u, D^\alpha v\rangle_{L^2}
=
\langle u,v\rangle_{L^2}
+
\sum_{1\le|\alpha|\le k}
\langle D^\alpha u, D^\alpha v\rangle_{L^2}.
$$
So the {\color{black} positive} $L^2$ term is only one part of the $H^k$ inner product, while the higher-derivative terms may change the sign.
Let
\begin{equation}
    S :=
\sum_{1\le|\alpha|\le k}
\langle D^\alpha u, D^\alpha v\rangle_{L^2}.
\label{eq:S-inner-product}
\end{equation}
Then
$$
|S|
\le
\sum_{1\le|\alpha|\le k}
\left| \langle D^\alpha u, D^\alpha v\rangle_{L^2} \right|.
$$
Since the sign of $S$ is unknown, we can then establish a  lower bound for $S$; where
$$
S
\ge
-
\sum_{1\le|\alpha|\le k}
\left| \langle D^\alpha u, D^\alpha v\rangle_{L^2} \right|.
$$
We now add $\langle u,v\rangle_{L^2}$ to both sides of the equation to establish
$$
\langle u,v\rangle_{H^k}
= 
\langle u,v\rangle_{L^2} + S
\ge
\langle u,v\rangle_{L^2}
-
\sum_{1\le|\alpha|\le k}
\left| \langle D^\alpha u, D^\alpha v\rangle_{L^2} \right|.
$$
We can establish a strict lower bound through the application of Cauchy--Schwarz; where
$$
\left| \langle D^\alpha u, D^\alpha v\rangle_{L^2} \right|
\le
\|D^\alpha u\|_{L^2}\,\|D^\alpha v\|_{L^2}.
$$
Hence, we can then establish the following inequality, 
\begin{equation}
\langle u,v\rangle_{H^k} \ge
\langle u,v\rangle_{L^2}
-
\sum_{1\le|\alpha|\le k}
\|D^\alpha u\|_{L^2}\,\|D^\alpha v\|_{L^2}    
\label{eq:generalSobolevinnerproduct}
\end{equation}
If the positive $L^2$ contribution dominates the derivative cross-terms, e.g. if
$$
\sum_{1\le|\alpha|\le k}
\|D^\alpha u\|_{L^2}\,\|D^\alpha v\|_{L^2} < \langle u,v\rangle_{L^2},
$$
then $\langle u,v\rangle_{H^k} > 0$. Equivalently, if the higher-derivative correlations are small relative to the $L^2$ correlation, positivity is preserved. This provides a sufficient condition for $\langle u,v\rangle_{H^k} > 0$. In FR, a parameter $c$ scales the $p$-th derivative term and therefore the inner-product can then be rewritten using Eq.~\eqref{eq:generalSobolevinnerproduct} as
\begin{equation}
\langle u,v\rangle_{H^k_c} \ge
\langle u,v\rangle_{L^2}
-
c\|D^p u\|_{L^2}\,\|D^p v\|_{L^2} \ge 0.   
\end{equation}
Therefore, there is a range of values of $c$ for which the broken Sobolev norm remains positive.
\end{proof}

\begin{remark}
    Since the sign of the term $S$ {\color{black}(Eq.~\eqref{eq:S-inner-product})} of the $p$-the derivative term above could be either negative or positive, we can extend Theorem~\ref{Thm:innerproduct_in_Sobolev} and state that 
\begin{equation}
    \langle u,v\rangle_{L^2}
- c\|D^p u\|_{L^2}\,\|D^p v\|_{L^2} \le     \langle u,v\rangle_{H^k_c} \le
\langle u,v\rangle_{L^2}
+ c\|D^p u\|_{L^2}\,\|D^p v\|_{L^2}.
\label{eq:rangeforNSFRStability}
\end{equation}
The broken Sobolev inner product differs from the $L^2$ inner product by at most $S$ so the derivative terms can shift the value by at most $S$ in either direction. Eq.~\eqref{eq:rangeforNSFRStability} holds regardless of whether $\langle u,v\rangle_{L^2}$ is positive or negative. Hence, we claim that the flux-reconstruction variant of the space-time entropy-stable approach satisfies Eq.~\eqref{eq:temporalTadmorCondition} and remains entropy-stable for sufficiently small $c$.
\end{remark}

\section{Space-Time Nonlinearly-stable Flux Reconstruction for Scalar Conservation Laws}
\label{sec:Burgers-results}

We consider the analysis of Section~\ref{sec:ST-NSFR} in the context of scalar-valued conservation laws,
\begin{equation}
    \begin{cases}
        \frac{\partial u}{\partial t} +  \frac{\partial f_s(u)}{\partial x} =0 \ \ \ 
        \forall (x, t) \in ([x_L, x_R]\times[0, T])\\
        u(x_L,t) = u(x_R,t)\\
        u(x, 0) = u_0(x)
    \end{cases}
    \label{eq:scalar-nonlin-cons-law}
\end{equation}
for which we can define entropy as the norm $\frac{1}{2}||\bm{{u}}||_{{H^k_c}}$. The entropy potential is $\phi = u^2/2$ and the entropy variable is the state, $v=u$.

\begin{remark}
\label{rmk:burgers-preservation}
   Scalar conservation laws alike Eq.~\eqref{eq:scalar-nonlin-cons-law} can be shown to be entropy-stable in the broken Sobolev norm according to the analysis of Theorems~\ref{thm:entropyeqn}-\ref{Thm:innerproduct_in_Sobolev}. The inner-product numerical entropy results in an inner-product form of Eq.~\eqref{eq:entropyconservation},

    \begin{equation}
    \begin{aligned}
        \frac{1}{2} \sum_{k=1}^{K_s}||\bm{{u}}_f (T) &||_{{H^k_c}}  
        -\frac{1}{2} \sum_{k=1}^{K_s}||\bm{u}_0||_{{H^k_c}} \\
    &+ \sum_{k=1}^{K_s} \left[ \frac{1}{2}\llbracket ||{\bm{u}}_f||_{{H^k_c}}\rrbracket  
    -\langle \llbracket{\bm{u}}_f \rrbracket , \bm{u}_0 \rangle_{{H^k_c}}
    \right]\Bigg|_{t=0} = 0,
    \end{aligned}
    \label{eq:burgers-entropy-preservation}
    \end{equation}    
    enabling entropy preservation in the broken Sobolev norm. We note that the entropy projection of Eq.~\eqref{eq:entropy-projection} is not necessary in the scalar case, as the entropy and conservative variables are the same.
\end{remark}

{\color{black}The inner products appearing in Remark~\ref{rmk:burgers-preservation} are calculated as
\begin{equation}
    \langle \bm{u},\bm{u} \rangle_{{H^k_c}}
    =
    (\bm{\chi}(\bm{\bar{\xi}}_{f,3})\hat{\bm{u}})
    (\mathcal{M}_{1D} + \mathcal{K}_{1D})(\bm{\chi}(\bm{\bar{\xi}}_{f,3})\hat{\bm{u}})^T,
    \label{eq:energy-calculation-example}
\end{equation}
where the one-dimensional $\mathcal{M}_{1D}$ and $\mathcal{K}_{1D}$ are defined alike Section~\ref{sec:ST-ESFR} but using the flux basis $\bm{\phi}$ rather than the solution basis. Equation \eqref{eq:energy-calculation-example} demonstrates a calculation on face $3$ of the space-time reference element per Fig.~\ref{fig:reference_elem}; a similar formulation is used on face $4$.}

We use the Burgers' equation for numerical experiments, which is of the form of Eq.~\eqref{eq:scalar-nonlin-cons-law} with $f_s=u^2/2$. We use $x_L = 0, x_R=2$ and $T=2$ to define a computational domain. To test convergence behaviour, we add a manufactured source
\begin{equation}
    q(x,t) = \pi  \sin(\pi (x-t))\big(1-\cos(\pi (x-t))\big),
    \label{eq:burgers-source}
\end{equation}
which we calculate at the solution nodes and project to the solution basis.
The source term as in Eq. (\ref{eq:burgers-source}) results in an analytical solution at any time,
\begin{equation}
    u_{\text{exact}}(x,t) = \cos(\pi (x-t)),
\end{equation}
which is also used to set the Dirichlet boundary condition at $t=0$. 
The same manufactured solution is used in \cite{cicchino2022nonlinearly}.
We use the entropy-stable two-point temporal state
\begin{equation}
    f_t(u_i,u_j) = \frac{1}{2}(u_i+u_j)
    \label{eq:burgers-temporal-state}
\end{equation}
which we derive in \ref{app:burgers_num_state}. The temporal numerical flux is purely upwind. We use the entropy-conserving spatial two-point flux,
\begin{equation}
    f(u_i,u_j) = \frac{1}{6}(u_i^2 + u_i u_j + u_j^2).
    \label{eq:burgers-spatial-num-flux}
\end{equation}
For convergence tests, we add a local Lax-Friedrichs dissipation term to the spatial flux to obtain an entropy-stable discretization. We solve the implicit system as described in Section~\ref{sec:lin-adv-results}. $L_2$ error is calculated using overintegration by 10 per dimension. The results of the convergence studies using collocated and uncollocated nodes are included in Tables \ref{tab:con-burgers-GLLGL} and \ref{tab:con-burgers-GLGL}. We observe optimal $p+1$ convergence for all $p=4$ configurations. The $p=3$ results lose optimal convergence before $c_{Hu}$, but maintain $p+1$ for the smaller $c=1E-5$, verifying that we reach optimal convergence for nonzero $c$ values. With the exception of $c_{Hu}$, error levels are higher in Table~\ref{tab:con-burgers-GLLGL} compared to Table~\ref{tab:con-burgers-GLGL}, reflecting the differing strength of the integration rules.

\begin{table}[h!]
\centering
\caption{Convergence of the Burgers' ST-NSFR method using GLL solution nodes and GL flux nodes.}
\label{tab:con-burgers-GLLGL}
\begin{tabular}{|c|c|ll|ll|ll|}
\hline
\multirow{2}{*}{Convergence test} & \multirow{2}{*}{$N$} & \multicolumn{2}{c|}{$c_{DG}$} & \multicolumn{2}{c|}{$c=1E-5$} & \multicolumn{2}{c|}{$c_{Hu}$} \\
                                  &                      & $L_2$ error       & rate      & $L_2$ error       & rate      & $L_2$ error       & rate      \\ \hline
                                  & 2                    & 2.31E-01          & --        & 2.30E-01          & --        & 1.94E-01          & --        \\
                                  & 4                    & 1.16E-02          & 4.32      & 1.15E-02          & 4.32      & 2.91E-02          & 2.74      \\
                                  & 8                    & 4.71E-04          & 4.62      & 4.78E-04          & 4.59      & 3.30E-03          & 3.14      \\
$p=3$                             & 16                   & 1.94E-05          & 4.60      & 2.06E-05          & 4.54      & 2.84E-04          & 3.54      \\
                                  & 32                   & 9.68E-07          & 4.32      & 1.04E-06          & 4.31      & 2.19E-05          & 3.70      \\
                                  & 64                   & 5.68E-08          & 4.09      & 6.29E-08          & 4.05      & 1.79E-06          & 3.61      \\
                                  & 128                  & 3.49E-09          & 4.02      & 4.35E-09          & 3.85      & 1.31E-07          & 3.77      \\ \hline
                                  & 2                    & 3.78E-02          & --        & 5.93E-02          & --        & 6.63E-02          & --        \\
                                  & 4                    & 1.49E-03          & 4.66      & 4.38E-03          & 3.76      & 5.20E-03          & 3.67      \\
                                  & 8                    & 4.12E-05          & 5.18      & 1.50E-04          & 4.87      & 1.80E-04          & 4.85      \\
$p=4$                             & 16                   & 9.06E-07          & 5.51      & 4.42E-06          & 5.09      & 5.37E-06          & 5.06      \\
                                  & 32                   & 1.91E-08          & 5.57      & 1.26E-07          & 5.13      & 1.54E-07          & 5.12      \\
                                  & 64                   & 4.71E-10          & 5.34      & 3.79E-09          & 5.05      & 4.73E-09          & 5.03      \\
                                  & 128                  & 1.33E-11          & 5.15      & 1.18E-10          & 5.00      & 1.48E-10          & 5.00      \\ \hline
\end{tabular}
\end{table}

\begin{table}[h!]
\centering
\caption{Convergence of the Burgers' ST-NSFR method using collocated GL solution nodes and flux nodes.}
\label{tab:con-burgers-GLGL}
\begin{tabular}{|c|c|ll|ll|ll|}
\hline
\multirow{2}{*}{Convergence test} & \multirow{2}{*}{$N$} & \multicolumn{2}{c|}{$c_{DG}$} & \multicolumn{2}{c|}{$c=1E-5$} & \multicolumn{2}{c|}{$c_{Hu}$} \\
                                  &                      & $L_2$ error       & rate      & $L_2$ error       & rate      & $L_2$ error       & rate      \\ \hline
                                  & 2                    & 7.27E-02          & --        & 7.36E-02          & --        & 1.64E-01          & --        \\
                                  & 4                    & 4.54E-03          & 4.00      & 4.76E-03          & 3.95      & 3.20E-02          & 2.36      \\
                                  & 8                    & 2.35E-04          & 4.27      & 2.43E-04          & 4.29      & 3.29E-03          & 3.28      \\
$p=3$                             & 16                   & 1.42E-05          & 4.05      & 1.49E-05          & 4.03      & 2.82E-04          & 3.54      \\
                                  & 32                   & 8.84E-07          & 4.00      & 9.46E-07          & 3.98      & 2.19E-05          & 3.69      \\
                                  & 64                   & 5.54E-08          & 4.00      & 6.19E-08          & 3.93      & 1.79E-06          & 3.61      \\
                                  & 128                  & 3.47E-09          & 4.00      & 4.32E-09          & 3.84      & 1.31E-07          & 3.77      \\ \hline
                                  & 2                    & 1.82E-02          & --        & 7.51E-02          & --        & 8.33E-02          & --        \\
                                  & 4                    & 4.70E-04          & 5.27      & 4.47E-03          & 4.07      & 5.37E-03          & 3.96      \\
                                  & 8                    & 1.42E-05          & 5.05      & 1.51E-04          & 4.89      & 1.81E-04          & 4.89      \\
$p=4$                             & 16                   & 3.88E-07          & 5.19      & 4.44E-06          & 5.08      & 5.42E-06          & 5.07      \\
                                  & 32                   & 1.04E-08          & 5.22      & 1.26E-07          & 5.14      & 1.56E-07          & 5.12      \\
                                  & 64                   & 2.97E-10          & 5.13      & 3.79E-09          & 5.06      & 4.74E-09          & 5.04      \\
                                  & 128                  & 8.99E-12          & 5.05      & 1.55E-10          & 4.61      & 1.48E-10          & 5.00      \\ \hline
\end{tabular}
\end{table}
\subsection{Entropy Preservation for the Scalar-valued Case}
\label{sec:burgers-entropy-preservation}

We verify the conclusion of Remark~\ref{rmk:burgers-preservation} by reporting entropy preservation on the level of machine epsilon for a variety of element sizes, polynomial degrees, FR $c$ parameters and node setups in Table~\ref{tab:burgers-entropy-preservation}. For the entropy-preserving case, we use Eq.~\eqref{eq:burgers-temporal-state} for the temporal numerical flux and use the entropy-conserving Eq.~\eqref{eq:burgers-spatial-num-flux} without Lax-Friedrichs upwinding. The initial condition is set to
\begin{equation}
    u_0(x) = 0.2 \sin(\pi (x-\frac{\pi}{10}))
\end{equation}
on a domain of $[0,2]\times[0,2]$, which forms a shock before the final time. The use of a two-point numerical flux requires solving all timeslabs simultaneously. We accelerate the solution by initializing the solution as a loosely-converged entropy-stable solution with decoupled solutions on each timeslab, then proceed to the entropy-conserving coupled solution.

\begin{table}[h!]
\centering
\caption{Energy preservation as defined by Eq.~\eqref{eq:burgers-entropy-preservation} for the Burgers' test case. Each case uses an $N\times N$ $1D+1$ Cartesian grid with polynomial degree $p$.}
\label{tab:burgers-entropy-preservation}
\begin{tabular}{|c|cccc|c|}
\hline
FR $c$                    & $N$ & $p$ & Solution Nodes & Volume Nodes & Energy Preservation \\ \hline
\multirow{5}{*}{$c_{DG}$} & 2 & 3 & GLL            & GLL          & -3.37E-15           \\
                          & 2 & 3 & GLL            & GL           & -3.27E-15           \\
                          & 4 & 2 & GL             & GL           & 4.38E-17            \\
                          & 2 & 8 & GLL            & GL           & 9.12E-16            \\
                          & 4 & 5 & GLL            & GL           & -9.11E-18           \\ \hline
\multirow{5}{*}{$c_{Hu}$} & 2 & 3 & GLL            & GLL          & -1.41E-15           \\
                          & 2 & 3 & GLL            & GL           & -1.60E-14           \\
                          & 4 & 2 & GL             & GL           & 1.50E-15            \\
                          & 2 & 8 & GLL            & GL           & 4.53E-16            \\
                          & 4 & 5 & GLL            & GL           & -1.52E-17          \\ \hline
\end{tabular}
\end{table}

\subsection{Entropy Stability for the Scalar-valued Case}
\label{sec:burgers-entropy-stability}

We present a numerical verification of the entropy stability property of Theorem~\ref{thm:entropystability} in Fig.~\ref{fig:entropy-stability-check}. We re-use the test case of Section~\ref{sec:burgers-entropy-preservation}, which uses an entropy-conserving spatial flux. Rather than a two-point temporal numerical state, we use purely upwind numerical flux as specified in Eq.~\eqref{eq:temporalentropystab_bc}.
For a variety of $p=3$ grids, we confirm that numerical entropy is decreasing. We plot the numerical entropy integrated over face $4$ of each element per the numbering convention of Fig.~\ref{fig:reference_elem}.

\begin{figure}[h!]
    \centering
    \includegraphics[height=4 in]{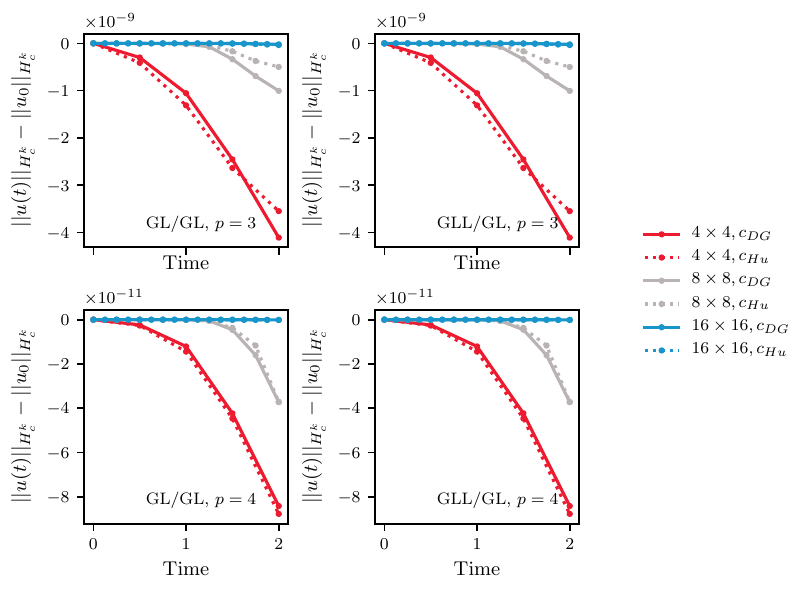}
    \caption{Burgers energy stability as defined by Eq.~\eqref{eq:burgers-entropy-preservation} using $c_{DG}$ and $c_{Hu}$ on grids with $p=3$. In all cases, energy monotonically degreases.}
    \label{fig:entropy-stability-check}
\end{figure}

Furthermore, we demonstrate that numerical entropy change remains negative as $c$ increases in Fig.~\ref{fig:entropy-stability-c-ramp-Burgers}. Alike Fig.~\ref{fig:entropy-stability-check}, we calculate numerical entropy by integrating $H_c^k$ energy over the surface of interest. We thus confirm that the FR portion of energy remains small per Theorem~\ref{Thm:innerproduct_in_Sobolev}; in fact, for the initial condition used here, the scheme is entropy-stable until $c$ is large.

\begin{figure}[h!]
    \centering
    \includegraphics[height = 3 in]{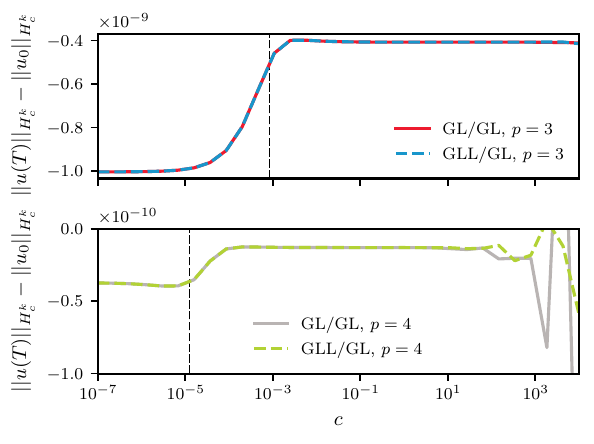}
    \caption{Burgers' $H_c^k$ energy change on an $8\times8$ grid. The first node label is solution and second set is flux (e.g., GL/GLL is GL solution nodes and GLL flux nodes). The vertical dashed line indicates $c_{Hu}$. The lines appear superimposed until $c$ is very large.}
    \label{fig:entropy-stability-c-ramp-Burgers}
\end{figure}

\section{Space-Time Entropy Stable Schemes for General Conservation Laws}
\label{sec:results-Euler}

We proceed to the case where numerical entropy is a nonlinear function without an inner-product form. Such a case arises in vector-valued PDEs such as the Euler or Navier-Stokes systems of fluid dynamics.
\subsection{Numerical Experiments Using the ST-NSFR Scheme}

The vector-valued nonlinear problem uses the $1D+1$ Euler equations, 
\begin{equation}
    \frac{\partial}{\partial t}\begin{bmatrix}
        \rho \\ \rho v \\ E
    \end{bmatrix}
    + \frac{\partial}{\partial x} \begin{bmatrix}
        \rho v \\ \rho v^2 + p \\ v(E + p) \end{bmatrix}
= \bm{0}^T
\end{equation}
where $\rho$ is density, $v$ is velocity, $E$ is total energy, and $p=(\gamma-1) (E - \rho u^2/2)$ is pressure calculated by the ideal gas law with a gas constant $\gamma=1.4$. The numerical entropy function for the Euler equations is $s(\bm{u})=-\rho \log(p \rho^{-\gamma}) / (\gamma - 1)$, with entropy variables 
\begin{equation}
    \bm{v}(\bm{u}) = \begin{bmatrix}
        \gamma + 1 - \log(p \rho^{-\gamma}) - E(\gamma-1)/p \\
        (\gamma-1) \rho v / p \\
        -\gamma(\gamma-1) / p
    \end{bmatrix}.
\end{equation}
The entropy potential is $\phi=\rho$.
For convergence tests, we use the same exact solution as in~\cite{gassner2016split}, which is written in conservative variables as
\begin{equation}
    \bm{u}_{\text{exact}}(x,t) = \begin{bmatrix}
        2 + \frac{1}{10}\sin(\pi (x-2t)) \\
        2 + \frac{1}{10}\sin(\pi (x-2t)) \\
        \left(2 + \frac{1}{10}\sin(\pi (x-2t))\right)^2
    \end{bmatrix},
\end{equation}
with the manufactured source defined to satisfy $\partial \bm{u}_{exact}/\partial t + \partial \bm{f}(\bm{u}_{exact})/\partial x = \bm{q}(x,t)$,
\begin{equation}
        \bm{q}(x,t) = \begin{bmatrix}
        -\frac{ \pi}{10}  \cos (\pi  (x-2 t)) \\
        \frac{ \pi}{100} \cos (\pi  (x-2 t)) \big(5 (7 \gamma -9)+2 (\gamma -1) \sin (\pi  (x-2 t))\big) \\
        \frac{ \pi}{100} \cos (\pi  (x-2 t)) \big(5(7 \gamma -15)+2 (\gamma -2) \sin (\pi  (x-2 t))\big)
    \end{bmatrix}.
\end{equation}
The source is applied explicitly per the preceding equation. 
{Note that we find a different source compared to that published by Gassner~\cite{gassner2016split}, and have verified its correctness.}
The exact solution defines the Dirichlet boundary condition at $t=0$. The domain is $[0,2]\times[0,2]$. 
We use the temporal numerical state function found by Friedrich \textit{et al.}~\cite{friedrich2019entropy} for the Euler equations,
\begin{equation}
    \bm{f}_t(\bm{u}_i,\bm{u}_j) = \begin{bmatrix}
        \rho^{\ln} \\
        \rho^{\ln} \{\!\!\{v \}\!\!\} \\
        \frac{\rho^{\ln}}{2 \beta^{\ln} (\gamma-1)} + \rho^{\ln}
        \tilde{v}
    \end{bmatrix}
\end{equation}
where we use the log-average and arithmetic mean
\begin{equation}
    a^{\ln} = \frac{a_i-a_j}{\ln(a_i) - \ln(a_j)} ,\quad 
    \{\!\!\{a\}\!\!\}=\frac{1}{2}(a_i+a_j).
\end{equation}
We calculate according to the numerically-stable routine of Ismail and Roe~\cite{ismail2009affordable}, and take $\tilde{v} = \{\!\!\{ v \}\!\!\} ^2 - \frac{1}{2}\{\!\!\{ v^2 \}\!\! \}$ and $\beta=v/(2p)$.
The Chandrashekar flux with Ranocha's pressure equilibrium correction is used in the spatial dimension~\cite{chandrashekar2013kinetic,ranocha2018comparison}. 
For convergence tests, we use upwind numerical fluxes per Eq.~\eqref{eq:temporalentropystab_bc} and add matrix dissipation to the spatial numerical flux per~\cite[Appendix A]{gassner2018br1}. We evaluate the interface jump using entropy variables when applying spatial numerical dissipation.
The spatial numerical flux is chosen consistently with the numerical experiments of Friedrich~\textit{et al.}~\cite{friedrich2019entropy}.

We solve the implicit system as described in Section~\ref{sec:lin-adv-results}. $L_2$ errors are presented in Tables~\ref{tab:conv-euler-GLLGL} and~\ref{tab:conv-euler-GLGL} for uncollocated and collocated nodes respectively, reflecting consistent $p+1$ order of accuracy.

\begin{table}[h!]
\centering
\caption{Convergence of the Euler ST-NSFR method using GLL solution nodes and GL flux nodes.}
\label{tab:conv-euler-GLLGL}
\begin{tabular}{|c|c|ll|ll|}
\hline
\multirow{2}{*}{Convergence test} & \multirow{2}{*}{$N$} & \multicolumn{2}{c|}{$c_{DG}$} & \multicolumn{2}{c|}{$c_{Hu}$} \\
                                  &                      & $L_2$ error       & rate      & $L_2$ error       & rate      \\ \hline
                                  & 2                    & 7.40E-02          & --        & 7.57E-02          & --        \\
                                  & 4                    & 5.04E-03          & 3.88      & 5.00E-03          & 3.92      \\
                                  & 8                    & 3.57E-04          & 3.82      & 3.58E-04          & 3.80      \\
$p=3$                             & 16                   & 2.36E-05          & 3.92      & 2.36E-05          & 3.92      \\
                                  & 32                   & 1.50E-06          & 3.97      & 1.51E-06          & 3.97      \\
                                  & 64                   & 9.38E-08          & 4.00      & 9.42E-08          & 4.00      \\ \hline
                                  & 2                    & 1.97E-02          & --        & 1.97E-02          & --        \\
                                  & 4                    & 1.19E-03          & 4.04      & 1.19E-03          & 4.05      \\
                                  & 8                    & 4.52E-05          & 4.72      & 4.52E-05          & 4.72      \\
$p=4$                             & 16                   & 1.53E-06          & 4.89      & 1.53E-06          & 4.89      \\
                                  & 32                   & 4.87E-08          & 4.97      & 4.88E-08          & 4.97      \\
                                  & 64                   & 1.53E-09          & 4.99      & 1.53E-09          & 4.99      \\ \hline
\end{tabular}
\end{table}

\begin{table}[h!]
\centering
\caption{Convergence of the Euler ST-NSFR method using collocated GL solution nodes and flux nodes.}
\label{tab:conv-euler-GLGL}
\begin{tabular}{|c|c|ll|ll|}
\hline
\multirow{2}{*}{Convergence test} & \multirow{2}{*}{$N$} & \multicolumn{2}{c|}{$c_{DG}$} & \multicolumn{2}{c|}{$c_{Hu}$} \\
                                  &                      & $L_2$ error       & rate      & $L_2$ error       & rate      \\ \hline
                                  & 2                    & 8.53E-02          & --        & 8.57E-02          & --        \\
                                  & 4                    & 6.73E-03          & 3.67      & 6.78E-03          & 3.66      \\
                                  & 8                    & 5.10E-04          & 3.72      & 5.27E-04          & 3.69      \\
$p=3$                             & 16                   & 5.86E-05          & 3.12      & 6.13E-05          & 3.10      \\
                                  & 32                   & 4.20E-06          & 3.80      & 5.52E-06          & 3.47      \\
                                  & 64                   & 2.64E-07          & 4.00      & 3.29E-07          & 4.07      \\ \hline
                                  & 2                    & 1.84E-02          & --        & 2.40E-02          & --        \\
                                  & 4                    & 1.17E-03          & 3.97      & 1.34E-03          & 4.16      \\
                                  & 8                    & 3.43E-05          & 5.09      & 5.13E-05          & 4.71      \\
$p=4$                             & 16                   & 1.11E-06          & 4.95      & 1.91E-06          & 4.75      \\
                                  & 32                   & 3.50E-08          & 4.98      & 6.85E-08          & 4.80      \\
                                  & 64                   & 1.10E-09          & 5.00      & 2.30E-09          & 4.89      \\ \hline
\end{tabular}
\end{table}
\subsection{Entropy Preservation for the Vector-valued Case}
\label{sec:Euler-EP-results}

We proceed by confirming in Table~\ref{tab:preservation-Euler} that the scheme is entropy-preserving when the two-point numerical state is also applied as the numerical flux as defined in Eq.~\eqref{eq:temporalentropycons_bc}. This is true on a variety of grid sizes, polynomial degrees, and node choices.  We use a similar discontinuous initial condition as that of~\cite{friedrich2019entropy}, given in primitive variables as
\begin{equation}
    (\rho_0, v_0, p_0) = 
    \begin{cases}
        (1,0,1), \ \ \ 0\leq x \leq 0.3 \\
        (1.125,0,1.1), \ \ \ 0.3 < x \leq 2.
    \end{cases}
    \label{eq:disc-IC}
\end{equation}
The entropy-conserving spatial flux of~\cite{chandrashekar2013kinetic,ranocha2018comparison} is used with no additional dissipation.

\begin{table}[h!]
\centering 
\caption{Entropy preservation per Eq.~\eqref{eq:entropyconservation} for the Euler test case using $c_{DG}$.}
\label{tab:preservation-Euler}
\begin{tabular}{|c|cccc|c|}
\hline
N & p & Solution Nodes & Volume Nodes & Energy Preservation \\ \hline
2 & 3 & GLL            & GLL          & -1.76E-14           \\
2 & 3 & GLL            & GL           & -2.59E-14           \\
4 & 2 & GL             & GL           & 8.97E-15            \\
2 & 8 & GLL            & GL           & -1.03E-14           \\
4 & 5 & GLL            & GL           & -2.74E-15           \\ \hline
\end{tabular}
\end{table}

\subsection{Entropy Stability for the Vector-Valued Case}
\label{sec:Euler-ES-results}
By Theorem~\ref{thm:entropystability}, we expect entropy stability for the $c_{DG}$ case when upwind temporal numerical state is applied per Eq.~\eqref{eq:temporalentropystab_bc}. We re-use the initial condition of Eq.~\eqref{eq:disc-IC}, modifying the temporal numerical state to be upwind per Eq.~\eqref{eq:temporalentropystab_bc}.  We verify that the ST-NSFR scheme results in monotonically decreasing entropy for all tested node choices, FR $c$ parameters, polynomial degrees, and grid levels in Figure~\ref{fig:entropy-stability-check-Eu}. Furthermore, we observe monotonic decrease in entropy for $c_{Hu}$. Theorem~\ref{Thm:innerproduct_in_Sobolev} predicts that entropy stability will be upheld for a range of non-zero $c$ values, which we verify on $8\times8$ grids in Figure~\ref{fig:entropy-stability-c-ramp-Eu}.

\begin{figure}[h!]
    \centering
    \includegraphics[height=4 in]{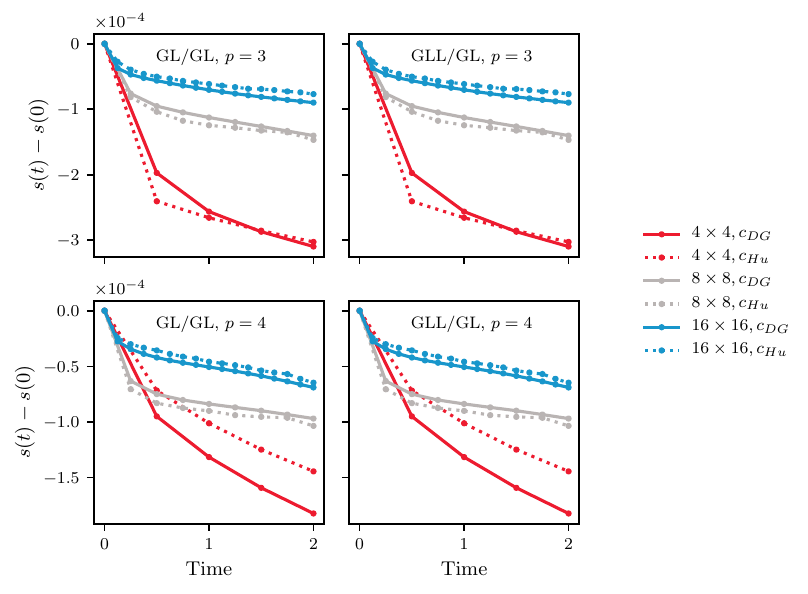}
    \caption{Euler entropy stability as defined by Eq.~\eqref{eq:entropyconservation} using $c_{DG}$ and $c_{Hu}$.}
    \label{fig:entropy-stability-check-Eu}
\end{figure}

\begin{figure}[h!]
    \centering
    \includegraphics[height=2.5in]{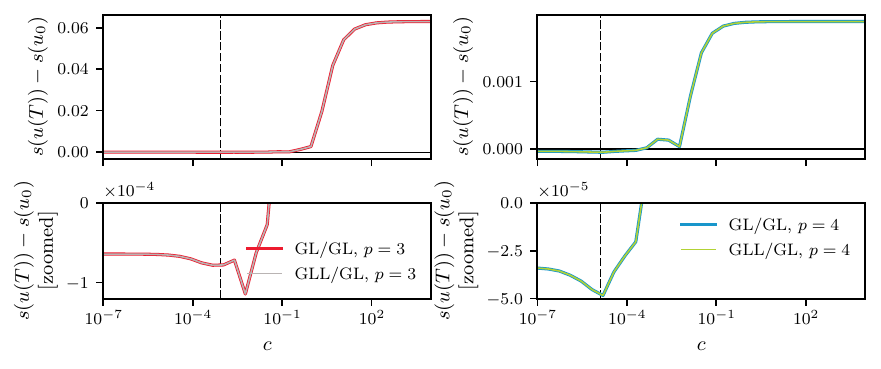}
    \caption{Entropy change over the simulation for the entropy-stable Euler test case. The results for each polynomial are visually indistinguishable. The vertical dashed line indicates $c_{Hu}$.}
    \label{fig:entropy-stability-c-ramp-Eu}
\end{figure}

\section{Convergence Behaviour of Space-time Flux Reconstruction Schemes.}
\label{sec:comp_cost}

Thus far, we have developed a scheme which recovers FR properties and is entropy-stable; in this section, we will demonstrate that it is also computationally efficient to use FR with a nonzero $c$ parameter.
In the method-of-lines approach, FR has long demonstrated a cost advantage over DG by increasing the size of a stable time step size, e.g. by~\cite{huynh_flux_2007,wang2009unifying}.
The ST approach is fully implicit, and therefore, the CFL condition is no longer appropriate.
Therefore, we reuse the test cases from previous sections and compare the number of iterations of the GMRES implicit solution as $c$ is varied.

First, we reuse the test case of Section \ref{sec:lin-adv-results} to demonstrate that the solver cost decreases before the loss of accuracy in Figure~\ref{fig:comp_cost_OOA}. The implicit solver is as described in Section \ref{sec:lin-adv-results}. We keep the solver settings consistent for all cases in this section. The relative cost of the solve is defined as the number of right-hand-side assemblies performed during the implicit solve of the last timeslab, normalized by the setup having GLL solution and GL flux quadrature nodes. We observe in Figure~\ref{fig:comp_cost_OOA} that all four combinations of nodes lead to a decrease in cost before loss of accuracy. Furthermore, collocated GL nodes are able to achieve a nearly identical OOA result at a consistently lower computational cost, highlighting the advantage of flexible node choices.
\begin{figure}[h!]
    \centering
    \includegraphics[height=2.5 in]{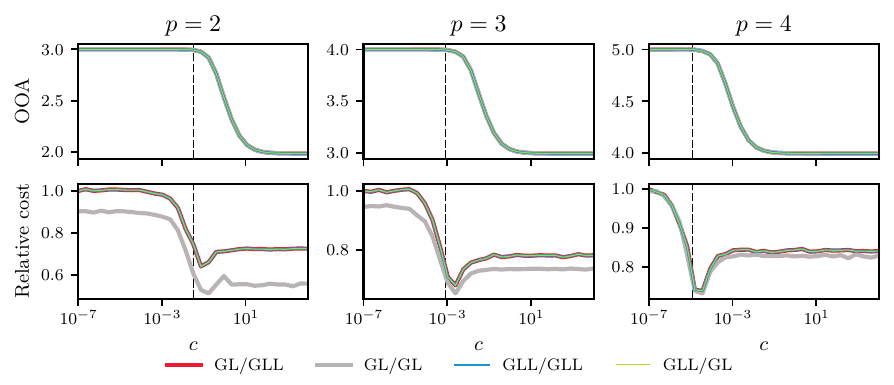}
    \caption{Computational cost for the linear advection case from Section \ref{sec:lin-adv-results} at $32\times32$ elements. With the exception of the $GL/GL$ combination, all results appear superimposed. The vertical dashed line indicates $c_{Hu}$.}
    \label{fig:comp_cost_OOA}
\end{figure}

We proceed to the nonlinear entropy-stable test cases of Sections
\ref{sec:burgers-entropy-stability} and \ref{sec:Euler-ES-results} and compare computational cost in the same way. Figure~\ref{fig:comp_cost_STNSFR} demonstrates a similar decrease in computational cost as the value of $c$ increases despite visually indistinguishable results in the previous sections. When $c$ values remain reasonably small, computational cost consistently decreases to about $70\%$ of the $c_{DG}$ cost.

\begin{figure}[h!]
    \centering
    \includegraphics[height=2.5 in]{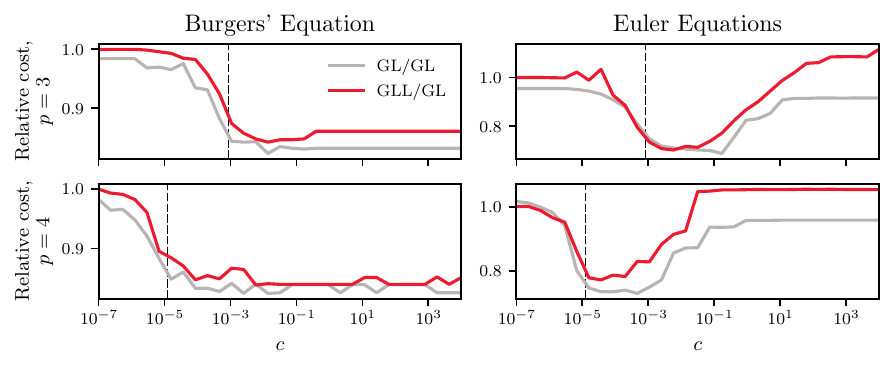}
    \caption{Computational cost for the entropy-stable ST-NSFR cases of Sections
\ref{sec:burgers-entropy-stability} and \ref{sec:Euler-ES-results}. The vertical dashed line indicates $c_{Hu}$.}
    \label{fig:comp_cost_STNSFR}
\end{figure}

\section{Conclusions}

Two space-time FR schemes were formulated with the shared philosophy of using FR in space and DG in time. The ST-ESFR method successfully recovered order-of-accuracy properties of MOL ESFR, supporting the proposed scheme as a space-time equivalent to standard FR implemenations. The main contribution of this work is the development of ST-NSFR, which recovers FR schemes through the choice of FR correction parameter $c$ and allows for flexible node choices. We prove that the novel ST-NSFR scheme is discretely conservative, entropy-preserving and entropy-stable for $c_{DG}$, and entropy-stable for small FR correction parameters $c$. Numerical results using linear advection, Burgers' equation and the Euler equations confirm that both schemes achieve optimal $p+1$ convergence. Both Burgers' and Euler results demonstrated entropy preservation and stability. Furthermore, a consistent reduction in computational cost was observed when using FR compared to DG across all three PDEs.

The theoretical basis for this work could be extended by adapting von Neumann analysis to a space-time framework to elucidate the relationship between the strength of FR and the conditioning of the implicit system. The ST-NSFR method should be tested on cases of industrial interest, such as wall-bounded flows, to investigate the comparison of computational cost between MOL and space-time frameworks.

\section{Data Availability}
The code used to produce the results of this paper, along with all raw results, are available at~\cite{Pethrick2026spacetimeDG}.

\section{Acknowledgements}
We acknowledge the support of the Natural Sciences and Engineering Research Council of Canada (NSERC) Discovery Grant Program [RGPIN-2019-04791] and Canadian Graduate Scholarships - Doctoral program [CGS-D-579552], and the support of McGill University. C.P. recognizes the support of the Zonta International Amelia Earhart Fellowship (2025-2026).


\bibliographystyle{elsarticle-num}
\bibliography{refs.bib}

\appendix

\section{Entropy-stable Numerical State for the Burgers' Equations}
\label{app:burgers_num_state}

We seek an entropy-stable numerical state function for the Burgers' equations, that is, one satisfying~\cite[Eq. (2.32)]{friedrich2019entropy}, 
\begin{equation}
    [[v]] f_t (u_i,u_j) = [[\phi]].
\end{equation}
We have entropy variable $v=u$ and entropy potential $\phi=1/2 \ u^2$ for the Burgers' equations. Thus, we have
\begin{equation}
    (u_i-u_j) f_t(u_i,u_j) = \frac{1}{2}(u_i^2-u_j^2),
\end{equation}
resulting in
\begin{equation}
    f_t (u_i,u_j) = \frac{1}{2}(u_i+u_j).
\end{equation}

\end{document}